\newtheorem{theorem}{\large Theorem}[section]
\newtheorem{lemma}{\large Lemma}[section]
\newtheorem{corollary}{\large Corollary}[section]
\newtheorem{definition}{\large Definition}[section]
\newtheorem{remark}{\large Remark}[section]
\newtheorem{example}{\large Example}[section]
\def\1{\rule{0pt}{1.7ex}pq}
\def\a{\rule{0pt}{1.1ex}pq}
\def\2{\rule{0pt}{1.7ex}p_0p}
\def\3{\rule{0pt}{2ex}X_p}
\def\x{\rule{0pt}{1.7ex}p}
\def\4{\rule{0pt}{1.7ex}1}
\def\5{\rule{0pt}{1.7ex}2}
\def\6{\rule{0pt}{1.7ex}p_0q}
\def\7{\rule{0pt}{1.7ex}p_0p_1}
\def\8{\rule{0pt}{1.7ex}p_0p_2}
\begin{document}

	\thispagestyle{empty}
	
	\title{Optimality Conditions for Interval-Valued Optimization Problems on Riemannian Manifolds Under a Total Order Relation}
	
	\author[a]{Hilal Ahmad Bhat}

	\author[a,*]{Akhlad Iqbal}

    \author[a]{Mahwash Aftab}

\affil[a]{\it \small Department of Mathematics, Aligarh Muslim University, Aligarh, 202002, Uttar Pradesh, India}

%
\date{}

\maketitle

\let\thefootnote\relax\footnotetext{*corresponding author (Akhlad Iqbal)\\ Email addresses: bhathilal01@gmail.com (Hilal Ahmad Bhat),\\ akhlad6star@gmail.com (Akhlad Iqbal),\\ mahwashaftab02@gmail.com (Mahwash Aftab)}


\begin{abstract}
	\noindent This article explores fundamental properties of convex interval-valued functions defined on Riemannian manifolds. The study employs generalized Hukuhara directional differentiability to derive KKT-type optimality conditions for an interval-valued optimization problem on Riemannian manifolds. Based on type of functions involved in optimization problems, we consider the following cases:
	\begin{itemize}
		\item [\textbullet] objective function as well as constraints are real-valued;
		\item[\textbullet]  objective function is interval-valued and constraints are real-valued;
		\item[\textbullet] objective function as well as contraints are interval-valued.
	\end{itemize} 
The whole theory is justified with the help of examples. The order relation that we use throughout the paper is a total order relation defined on the collection of all closed and bounded intervals in $\mathbb{R}$.
\end{abstract}

\section{Introduction}
Randomness, inexactness and imprecisions are natural to occur in real life decision making problems. An efficient decision making under uncertain environment leads to imposition of uncertainty in classical optimization programming problems. To tackle uncertainty in optimization programming problems, researchers have developed many optimization techniques and methods which are broadly classified into three different classes such as stochastic optimization programming (SOP), fuzzy optimization programming (FOP) and interval-valued optimization programming (IVOP). SOP and FOP respectively include use of random variables and fuzzy numbers which are subjective in nature, and it is hard to relate such methods to real life problems. The IVOP on the other hand provides an easier approach to tackle uncertainty in optimization programming problems. In IVOP, a closed and bounded interval in $\mathbb{R}$ is used to represent the uncertainty of a variable. Moreover, the coefficients of functions involved in an IVOP are closed and bounded intervals in $\mathbb{R}$.

In order to solve IVOP problems, many methods have been explored by various researchers. A basic overview of interval analysis is given by Moore \cite{moore2,moore1}, and Alefeld and Herzberger \cite{alefeld}. Ishibuchi and Tanaka \cite{ishibuchi} introduced the ordering relation of two closed and bounded intervals in terms of their center and half-width (radius) and derived the solution concepts for multi-objective IVOP problems. For the KKT optimality conditions of an IVOP and multi-objective IVOP problem defined on Euclidean spaces, one can refer to \cite{chalco-cano,sadikur,bilal1,bilal2,wu1,wu2}. 

Furthermore, several authors have laid focus on the extension of the methods and techniques developed for solving IVOP problems on Euclidean spaces to Riemannian manifolds, see \cite{bento2,bento1,ferreira,gabriel,li,nemeth,rapcsak1,rapcsak2,udriste,wang}. Such extensions have their own benefits such as, a non-convex optimization program defined on Euclidean space can turnout to be a convex program when introduced to a Riemannian manifold under a suitable Riemannian metric \cite{chen,rapcsak1,rapcsak2}. Moreover, a non-monotone vector field turns out to be a monotone vector field when extended to a suitable Riemannian manifold \cite{ferreira,rapcsak1,rapcsak2}. Udriste \cite{udriste} and Rapcsak \cite{rapcsak2} are the first authors who considered such extensions. Chen \cite{chen} presented the sufficient KKT optimality conditions for a convex IVOP problem on a Hadamard manifold under a partial order relation.

However, the order relation defined by Ishibuchi and Tanaka \cite{ishibuchi} is not complete in the sense that any two closed and bounded intervals in $\mathbb{R}$ are not comparable. Utilizing a total order relation introduced by Bhunia and Samanta \cite{bhunia}, we study few fundamental properties of interval-valued convex functions on Riemannian manifolds. We employ the gH-directional derivative to study KKT type optimality (sufficient) conditions of an IVOP problem on Riemannian manifolds. We present an example of an IVOP problem on which the KKT conditions developed on Euclidean spaces by the authors \cite{chalco-cano,chen,sadikur,bilal1,bilal2,wu1,wu2} can't be applied, however, the same problem can be solved by the techniques developed in this paper.

\section{Preliminaries}
In this section, we revisit fundamental definitions, notations, and established findings concerning Riemannian manifolds that will be employed consistently in the entirety of this article. For more details see \cite{rapcsak2,sakai,loring,udriste}.

Let $(M,g)$ be a complete finite dimensional Riemannian manifold with Riemannian metric $g$ and Riemannian connection $\nabla$ on $M$. The tangent space of $M$ at $p_0$ is denoted by $T_{p_0}(M)$ and the tangent space of a subset $E \subseteq (M,g)$ at $p_0\in E$ is denoted by $T_{p_0}(E)$. Given a piece-wise smooth curve $\gamma_{\1} : [a_1,a_2] \rightarrow M$ joining $p$ and $q$ i.e., $\gamma_{\1}(a_1)=p$ and $\gamma_{\1}(a_2) =q$, the length of $\gamma_{\1}$ is given by $ L(\gamma_{\1}(s))= {\displaystyle\int_a^b} \left.\sqrt{g(\gamma_{\1}'(s),\gamma_{\1}'(s))}\right|_{\gamma_{\a}(s)} ds$ and the Riemannian distance between $p$ and $q$ is given by $d(p,q)=\underset{\gamma_{\a}}{\text{inf}}~ L(\gamma_{\1})$. A vector field on $M$ is a mapping of $M$ into $TM~\left(=\underset{p\in M}{\bigcup}T_p(M)\right)$ which associates to each $p\in M$ a vector $X_p\in T_p(M)$. A vector field $X$ is said to be parallel along $\gamma$ if $\nabla_{\gamma'}X=0$. We say $\gamma$ is a geodesic if $\gamma'$ is itself parallel along $\gamma$. A geodesic $\gamma_{\1}$ joining $p$ and $q$ is minimal if $L(\gamma)=d(p,q)$. By Holf-Rinow theorem, we have 
\begin{itemize}
	\item[\textbullet] $(M,d)$ is a complete metric space;
	\item[\textbullet] closed and bounded subsets of $M$ are compact;
	\item [\textbullet] any two points in $M$ can be joined by a minimal geodesic.
\end{itemize}

\noindent For any $X_p\in T_p(M)$, the exponential map $exp_{\x}:T_p(M) \rightarrow M$ at $p$ is defined by $exp_{\x}(X_p)=\gamma_{\3}(1)$, where $\gamma_{\3}(s),~s\in I,~ 0 \in I,$ is a geodesic emanating from $p=\gamma_{\3}(0)$ in the direction $X_p=\dot{\gamma}(0)$ and $I$ is an interval in $\mathbb{R}$. It satisfies the following property
$$exp_{\x} (s X_p)=\gamma_{s \3}(1)=\gamma_{\3}(s).$$
The exponential map $exp_{\x}$ is differentiable at $p$ and its differential is an identity map.

\noindent Now we recall the basic arithmetics of intervals.

We denote by $\mathbb{I}$ the collection of closed and bounded intervals of $\mathbb{R}$. Let $T\in \mathbb{I}$, we write $T=[t^l,t^u]$ where $t^l$ and $t^u$ are lower and upper bounds of $T$, respectively. For $T_1,T_2 \in \mathbb{I}$ and $n \in \mathbb{R}$, we have
$$T_1+T_2=\{t_1+t_2: t_1\in T_1,t_2 \in T_2\}=[t_1^l+t_2^l,t_1^u+t_2^u]$$
$$nT_1=\{nt_1:t_1\in T_1\}=\begin{cases}
	[nt_1^l,nt_1^u], & n \geq 0;\\
	[nt_1^u,nt_1^l], & n<0.
\end{cases}$$
From the above two expressions, one has
$$-T_1=[-t_1^u,-t_1^l] ~~ \text{and}~~ T_1-T_2=[t_1^l-t_2^u,t_1^u-t_2^l].$$
The Hausdorff distance between $T_1$ and $T_2$ is 
\begin{equation} \label{metric,p2}
	d_H(T_1,T_2) =\text{max}\{|t_1^l-t_2^l|,|t_1^u-t_2^u|\}.
\end{equation}
For more details, we refer to \cite{alefeld,moore2,wu1}.

We can also represent an interval $T\in \mathbb{I}$ in terms of its center and half width (radius) as
\begin{equation} \label{eq1,p2}
	T=\langle t^c,t^w \rangle,
\end{equation}
where $t^c=\frac{t^l+t^u}{2}$ and $t^w=\frac{t^u-t^l}{2}$ are respectively the center and half-width of $T$. Throughout the paper, we will consider the representation (\ref{eq1,p2}) of an interval $T\in \mathbb{I}$. 

The generalized Hukuhara difference (gH-difference) of two intervals $T_1$ and $T_2$ was introduced by Stefanini and Bede \cite{stefanini}. This concept is represented as follows:
$$T_1 \ominus_{gH} T_2 = T_3 ~ \Leftrightarrow ~ \left \{ \begin{array}{rll}
	(i) &T_1 = T_2 + T_3, &\text{or} \\ 
	(ii) &T_2 = T_1 - T_3. &  
\end{array} \right.$$
In case $(i)$, the gH-difference coincides with the H-difference \cite{wu1}. For any two intervals $T_1=[t_1^l,t_1^u], ~ T_2 = [t_2^l, t_2^u], ~ T_1 \ominus_{gH} T_2$ always exists and is unique. Also, we have 
$$T_1\ominus_{gH} T_1 = [0,0] ~~~ \text{and} ~~~ T_1 \ominus_{gH} T_2 = [\text{min}\{t_1^l-t_2^l, t_1^u-t_2^u\}, \text{max}\{t_1^l-t_2^l, t_1^u-t_2^u\}].$$

The following lemma expresses the gH-difference of two intervals in $\mathbb{I}$ in terms of their center and half-width.

\begin{lemma} \cite{hilal}
	\label{lemma13,p2}
	For any two intervals $T_1,T_2 \in \mathbb{I}$ with $T_1=[t_1^l,t_1^u]=\langle t_1^c,t_1^w\rangle$ and $T_2=[t_2^l,t_2^u]=\langle t_2^c,t_2^w\rangle$, we have
	$$T_1\ominus_{gH} T_2~ = ~\langle t_1^c-t_2^c,~|t_1^w-t_2^w|\rangle$$
\end{lemma}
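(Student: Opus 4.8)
The plan is to reduce the claim to the explicit endpoint formula for the gH-difference recorded just above the statement, namely $T_1 \ominus_{gH} T_2 = [\min\{t_1^l - t_2^l, t_1^u - t_2^u\}, \max\{t_1^l - t_2^l, t_1^u - t_2^u\}]$, and then rewrite that closed interval in the center--radius representation $\langle \cdot, \cdot \rangle$. The only tools needed are the two elementary identities $\min\{a,b\} + \max\{a,b\} = a+b$ and $\max\{a,b\} - \min\{a,b\} = |a-b|$, valid for all reals $a, b$.

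First I would abbreviate $a = t_1^l - t_2^l$ and $b = t_1^u - t_2^u$, so that $T_1 \ominus_{gH} T_2 = [\min\{a,b\}, \max\{a,b\}]$. The center of this interval is $\tfrac{1}{2}(\min\{a,b\} + \max\{a,b\}) = \tfrac{1}{2}(a+b)$; regrouping gives $\tfrac{1}{2}((t_1^l + t_1^u) - (t_2^l + t_2^u)) = t_1^c - t_2^c$, the claimed center. Its half-width is $\tfrac{1}{2}(\max\{a,b\} - \min\{a,b\}) = \tfrac{1}{2}|a - b|$, and substituting $a - b = (t_1^l - t_1^u) - (t_2^l - t_2^u)$ together with $t_1^u - t_1^l = 2t_1^w$ and $t_2^u - t_2^l = 2t_2^w$ turns this into $|t_1^w - t_2^w|$. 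Combining the two computations gives exactly $\langle t_1^c - t_2^c, |t_1^w - t_2^w|\rangle$.

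There is no genuine obstacle here; the one point demanding care is the absolute value in the half-width, whose presence is forced because the sign of $t_1^w - t_2^w$ is a priori unknown. This is precisely what distinguishes the gH-difference from a naive endpoint subtraction, and it mirrors the two alternatives (i) and (ii) in its definition. As a sanity check I would recompute the radius by cases: if $t_1^w \ge t_2^w$ the H-difference of case (i) applies and yields radius $t_1^w - t_2^w$, whereas if $t_1^w < t_2^w$ case (ii) yields $t_2^w - t_1^w$; both are subsumed under $|t_1^w - t_2^w|$, confirming the formula.
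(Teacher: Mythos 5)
Your proof is correct. Note that the paper itself gives no proof of this lemma---it is quoted from the cited reference \cite{hilal}---so there is no in-paper argument to compare against; your derivation, reducing the claim to the endpoint formula $T_1 \ominus_{gH} T_2 = [\min\{t_1^l-t_2^l,\, t_1^u-t_2^u\},\, \max\{t_1^l-t_2^l,\, t_1^u-t_2^u\}]$ stated just before the lemma and applying the identities $\min+\max = a+b$ and $\max-\min = |a-b|$, is the natural self-contained verification, and both the center computation and the absolute value in the half-width come out exactly right. Your closing case check against alternatives (i) and (ii) of the gH-difference definition is also accurate and explains why the absolute value is unavoidable.
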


The order relation between two intervals in $\mathbb{I}$ used in the articles \cite{chalco-cano,chen,bilal1,bilal2,wu1,wu2} is a partial order relation given by
\begin{equation}\label{eq2,p2}
	T_1 \preceq_{lu} T_2 \Longleftrightarrow t_1^l\leq t_2^l ~ \text{and} ~ t_1^u\leq t_2^u.
\end{equation}
The order relation (\ref{eq2,p2}) in $\mathbb{I}$ is not a total order meaning that any two intervals in $\mathbb{I}$ are not comparable. For example, choose $T_1=[1,4]$ and $T_2=[2,3]$ then $t_1^l<t_2^l$ but $t_1^u> t_2^u$ which implies $A$ and $B$ are not comparable with respect to order relation \ref{eq2,p2}. Hence, it is not a total order relation.

In view of the above discussion, Bhunia and Samanta \cite{bhunia} proposed the following order relations:
\begin{itemize}
	\item [i)] {\large Minimization IVOP problem}\\
	For any two intervals $T_1,T_2 \in \mathbb{I}$ with $T_1=\langle t_1^c,t_1^w\rangle$ and $T_2=\langle t_2^c,t_2^w\rangle$, we say $T_1$ is superior (or more preferable) to $T_2$ in a minimization problem if and only if center of $T_1$ is strictly less than center of $T_2$ and half-width (radius), which measures uncertainty (or inexactness), of $T_1$ is less than or equal to $T_2$ i.e.,
	\begin{equation} \label{orderrelation,p2}
		T_1 \leq^{\text{min}} T_2 \Longleftrightarrow \begin{cases}
			t_1^c<t_2^c, & t_1^c \neq t_2^c;\\
			t_1^w\leq t_2^w, & t_1^c=t_2^c.
		\end{cases}
	\end{equation}
	$$~~T_1 <^{\text{min}} T_2 \Longleftrightarrow T_1 \leq^{\text{min}} T_2 \text{~and~} T_1\neq T_2.$$
	
	\item [ii)] {\large Maximization IVOP problem}\\
	Similar to minimization IVOP problem, the order relations in case of maximization IVOP are given by
	\begin{equation} \label{eq4,p2}
		T_1 \geq^{\text{max}} T_2 \Longleftrightarrow \begin{cases}
			t_1^c>t_2^c, & t_1^c \neq t_2^c;\\
			t_1^w\leq t_2^w, & t_1^c=t_2^c.
		\end{cases}
	\end{equation}
	$$~~T_1 >^{\text{max}} T_2 \Longleftrightarrow T_1 \geq^{\text{max}} T_2 \text{~and~} T_1\neq T_2.$$
\end{itemize} 
One can easily verify that the order relations given by the expressions (\ref{orderrelation,p2}) and (\ref{eq4,p2}) are total order relations. Throughout the paper, we will be considering the minimization IVOP problem and the order relation given by expression (\ref{orderrelation,p2}).\vspace{0.15cm}

In view of the order relation (\ref{orderrelation,p2}), we provide some basic lemmas that will be used frequently in sequel.

\begin{lemma}\cite{hilal} \label{lemma4,p2}
	For any two intervals $T_1,T_2\in \mathbb{I}$ with $T_1=\langle t_1^c,t_1^w\rangle$ and $T_2=\langle t_2^c,t_2^w\rangle$, and any $s_1, s_2 \in \mathbb{R}$, we have
	$$s_1 T_1 + s_2 T_2 =\langle s_1 t_1^c + s_2 t_2^c, |s_1|t_1^w +|s_2|t_2^w\rangle.$$
\end{lemma}

\begin{lemma} \label{lemma2,p2}
	For any $T_1,T_2,T_3,T_4 \in \mathbb{I}$, we have the following:
	\begin{itemize}
		\item [\bf (i)] if $T_1 \leq^{\text{min}} T_2$, then $s T_1 \leq^{\text{min}} s T_2$, ~ $s \geq 0$;
		\item [\bf (ii)] if $T_1 \leq^{\text{min}} T_2$ and $T_3 \leq^{\text{min}} T_4$, then $T_1+T_3 \leq^{\text{min}} T_2+T_4$;
		\item [\bf (iii)] if ~$0 \leq^{\text{min}} T_1+s$, then $-s \leq^{\text{min}} T_1$, ~ for any $s \in \mathbb{R}$.
	\end{itemize}
\end{lemma}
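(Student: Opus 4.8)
The plan is to reduce every assertion to the center--half-width representation supplied by Lemma \ref{lemma4,p2} and then unwind the definition (\ref{orderrelation,p2}) of $\leq^{\min}$ by splitting into its two defining cases (centers unequal versus centers equal). Throughout I write $T_i=\langle t_i^c,t_i^w\rangle$, and I record two facts to be reused: every half-width obeys $t_i^w\geq 0$, and by inspecting (\ref{orderrelation,p2}) the relation $T_1\leq^{\min}T_2$ always forces the weak center inequality $t_1^c\leq t_2^c$.

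For part \textbf{(i)}, since $s\geq 0$ we have $|s|=s$, so Lemma \ref{lemma4,p2} gives $sT_1=\langle s t_1^c, s t_1^w\rangle$ and $sT_2=\langle s t_2^c, s t_2^w\rangle$. If $s=0$ both intervals equal $\langle 0,0\rangle$ and the conclusion is immediate. If $s>0$ I split on the hypothesis: when $t_1^c<t_2^c$, multiplication by $s>0$ preserves the strict center inequality; when $t_1^c=t_2^c$ with $t_1^w\leq t_2^w$, the centers of $sT_1,sT_2$ coincide and $s t_1^w\leq s t_2^w$. In either subcase $sT_1\leq^{\min}sT_2$.

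For part \textbf{(ii)}, Lemma \ref{lemma4,p2} gives $T_1+T_3=\langle t_1^c+t_3^c,\,t_1^w+t_3^w\rangle$ and $T_2+T_4=\langle t_2^c+t_4^c,\,t_2^w+t_4^w\rangle$. From the two hypotheses and the weak-center remark above, $t_1^c\leq t_2^c$ and $t_3^c\leq t_4^c$, hence $t_1^c+t_3^c\leq t_2^c+t_4^c$. If this inequality is strict the sum-centers differ and we are finished. The delicate case is equality $t_1^c+t_3^c=t_2^c+t_4^c$: here I argue that two $\leq$'s summing to an equality must each be equalities, so $t_1^c=t_2^c$ and $t_3^c=t_4^c$; then (\ref{orderrelation,p2}) yields $t_1^w\leq t_2^w$ and $t_3^w\leq t_4^w$, whence $t_1^w+t_3^w\leq t_2^w+t_4^w$ and the equal-center branch of the definition applies. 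This tie-breaking step is the only genuinely non-routine point of the lemma.

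For part \textbf{(iii)}, I view the reals $s$ and $-s$ as degenerate intervals $\langle s,0\rangle$ and $\langle -s,0\rangle$, so that $T_1+s=\langle t_1^c+s,\,t_1^w\rangle$ by Lemma \ref{lemma4,p2}. Unwinding $0=\langle 0,0\rangle\leq^{\min}T_1+s$ through (\ref{orderrelation,p2}) gives either $t_1^c+s>0$ (when $t_1^c+s\neq 0$) or the center-equal branch, which holds automatically because $t_1^w\geq 0$. Reading off what $-s\leq^{\min}T_1$ demands---namely $t_1^c>-s$ when $t_1^c\neq -s$, and $0\leq t_1^w$ when $t_1^c=-s$---one sees that both requirements are governed by the sign of $t_1^c+s$ and therefore coincide with what the hypothesis already supplies, so the implication holds in each case.
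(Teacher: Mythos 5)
Your proposal is correct and follows essentially the same route as the paper: write everything in center--half-width form via Lemma \ref{lemma4,p2} and split into the two branches of the order relation (\ref{orderrelation,p2}). The only differences are organizational --- in part \textbf{(ii)} you replace the paper's four-case analysis with the weak center inequality plus a tie-breaking step (two inequalities summing to an equality must both be equalities), which is logically equivalent and slightly tidier, and your separate treatment of $s=0$ in part \textbf{(i)} is a touch more careful than the paper's blanket multiplication by $s$.
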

\begin{proof}
	{\bf(i)} From the order relation (\ref{orderrelation,p2}), we have 
	$$T_1 \leq^{\text{min}} T_2 \Longleftrightarrow \begin{cases}
		t_1^c<t_2^c, & t_1^c \neq t_2^c;\\
		t_1^w\leq t_2^w, & t_1^c=t_2^c,
	\end{cases}$$
	which for $s \geq 0$ gives
	$$\begin{cases}
		s t_1^c<s t_2^c, & s t_1^c \neq s t_2^c;\\
		s t_1^w\leq s t_2^w, & s t_1^c=s t_2^c.
	\end{cases}$$
	Using order relation (\ref{orderrelation,p2}), we have
	$$\langle s t_1^c,s t_1^w\rangle \leq^{\text{min}} \langle s t_2^c,s t_2^w\rangle.$$
	This together with Lemma \ref{lemma4,p2}, yields
	$$s T_1  \leq^{\text{min}} s T_2.$$
	{\bf (ii)} From order relation (\ref{orderrelation,p2}), we have 
	\begin{align*}
		&T_1 \leq^{\text{min}} T_2 \Longleftrightarrow \begin{cases}
			t_1^c<t_2^c, & t_1^c \neq t_2^c;\\
			t_1^w\leq t_2^w, & t_1^c=t_2^c,
		\end{cases}\\
		\text{and}~~&T_3 \leq^{\text{min}} T_4 \Longleftrightarrow \begin{cases}
			t_3^c<t_4^c, & t_3^c \neq t_4^c;\\
			t_3^w\leq t_4^w, & t_3^c=t_4^c,
		\end{cases}
	\end{align*}
	We now have the following four cases:\vspace{0.15cm}\\
	{ Case (1):} $t_1^c<t_2^c,~ t_1^c \neq t_2^c$ and $t_3^c<t_4^c, ~t_3^c \neq t_4^c.$
	$$\Rightarrow~ t_1^c+t_3^c<t_2^c+t_4^c,~~ t_1^c+t_3^c\neq t_2^c+t_4^c,\hspace{4.1cm}$$
	which, together with order relation (\ref{orderrelation,p2}) and Lemma \ref{lemma4,p2}, yields
	$$T_1+T_3\leq^{\text{min}}T_2+T_4.$$
	{ Case (2):} $t_1^c<t_2^c,~ t_1^c \neq t_2^c$ and $t_3^w\leq t_4^w, ~t_3^c = t_4^c.$
	$$\Rightarrow~ t_1^c+t_3^c<t_2^c+t_4^c,~~ t_1^c+t_3^c\neq t_2^c+t_4^c,\hspace{4.4cm}$$
	which, together with order relation (\ref{orderrelation,p2}) and Lemma \ref{lemma4,p2}, yields
	$$T_1+T_3\leq^{\text{min}}T_2+T_4.$$
	{Case (3):} $t_1^w\leq t_2^w,~ t_1^c = t_2^c$ and $t_3^c < t_4^c, ~t_3^c \neq t_4^c.$\\
	{Case (4):} $t_1^w\leq t_2^w,~ t_1^c = t_2^c$ and $t_3^w\leq t_4^w, ~t_3^c = t_4^c.$\\
	{Cases (3) and (4)} are similar to that of {Cases (1) and (2)}.
	\vspace{0.15cm}
	
	\noindent {\bf (iii)} Here $s =\langle s,~0\rangle$ is an interval with center $s$ and half-width (radius) equal to zero.
	$$0 \leq^{\text{min}} T_1+s$$
	$$\Rightarrow~~ \langle0,~0\rangle ~\leq^{\text{min}}~ \langle t_1^c,~t_1^w\rangle + \langle s,~0\rangle$$
	which, together with Lemma \ref{lemma4,p2}, yields
	$$\langle 0,~0\rangle ~\leq^{\text{min}} ~\langle t_1^c+s,~t_1^w\rangle.$$
	From order relation (\ref{orderrelation,p2}), we have two cases:\vspace{0.15cm}\\
	\noindent Case (a): $0 < t_1^c + s$ $\Rightarrow~ -s < t_1^c$
	\begin{align*}
		\Rightarrow~~\langle -s,~0\rangle ~&\leq^{\text{min}} ~\langle t_1^c,~t_1^w\rangle,\\
		i.e.,~~~~ -s~ &\leq^{\text{min}} T_1
	\end{align*}
	
	\noindent Case (b): $0=t_1^c+s,$ which implies that $t_1^c=-s.$ Also, $t_1^w \geq 0$. This gives
	$$\langle -s,~0\rangle ~\leq^{\text{min}}~ \langle -s,~t_1^w\rangle ~=~ \langle t_1^c,~t_1^w\rangle,$$
	$$i.e., ~~~~ -s \leq^{\text{min}} T_1.$$
	\end{proof}

A function $f:E \rightarrow \mathbb{I}$ defined on a subset $E\subseteq (M,g)$ is called an interval-valued function (IVF) and we write $f(p)=\langle f^c(p),f^w(p)\rangle$, where $f^c(p)$ (center function) and $f^w(p)$ (half-width or radius function) are real-valued functions defined on $E$, and satisfies $f^w(p) \geq0 ~ \forall p \in E.$ \vspace{0.2cm} 

Next, we consider the following IVOP problem on $(M,g)$,\\
$$\begin{array}{lll}
	(P_1)\hspace{1cm} & \text{minimize} & f(p)=~\langle f^c(p),f^w(p)\rangle\vspace{0.1cm}\hspace{2cm}\\ 
	& \text{subject to} & p \in X,
\end{array}$$
where $f:E\rightarrow \mathbb{I}$, $E\subseteq(M,g)$ and $X$ is the feasible set. \vspace{0.15cm}

In view of the order relation given by expression (\ref{orderrelation,p2}), we give some basic definitions which will be used in sequel. 

\begin{definition}
	\rm A feasible point $p_0 \in X$ is said to be an optimal solution (strict optimal solution) to IVOP problem $(P_1)$ if no $p\in X$ exists such that $f(p)<^{\text{min}} f(p_0)$ ($f(p)\leq^{\text{min}} f(p_0)$).
\end{definition}

\begin{definition} \rm
	A point $p_0 \in E$ is said to be a local minimum point (local strict minimum point) of an IVF $f:E\rightarrow \mathbb{I}$ with $f(p)=\langle f^c(p),f^w(p)\rangle ,$ defined on a nonempty subset $E\subseteq (M,g)$, if there exists $\delta > 0$ such that $f(p_0) \leq^{\text{min}}f(p)$ ($f(p_0) <^{\text{min}} f(p)$), $\forall ~ p \in B(p_0,\delta)\cap E$, where $B(p_0,\delta)$ is an open ball about $p_0$ of radius $\delta$.
\end{definition}

\begin{definition} \rm
	A point $p_0 \in E$ is said to be a global minimum point (global strict minimum point) of an IVF $f:E\rightarrow \mathbb{I}$ with $f(p)=\langle f^c(p),f^w(p)\rangle ,$ defined on a nonempty subset $E\subseteq (M,g)$, if $f(p_0) \leq^{\text{min}}f(p)$ ($f(p_0) <^{\text{min}} f(p)$), $\forall ~ p \in E$.
\end{definition}
One can similarly define local maximum point, local strict maximum point, global maximum point and global strict maximum point.

\section{Convexity of an IVF}
In this section, we provide some fundamental definitions and results related to an IVF which is convex on whole of its domain.

\begin{definition} \rm
	\cite{udriste} A subset $E \subseteq (M,g)$ is said to be {totally convex} if $E$ contains every geodesic $\gamma_{\1}$ of $M$ whose end points $p$ and $q$ are in $E$.
\end{definition} 

The following definition gives notion of convexity for a real-valued function defined on a totally convex set $E\subseteq(M,g)$.
\begin{definition} \rm \label{definitionconvexreal,p3}
	\cite{udriste} Suppose $f:E\rightarrow\mathbb{R}$ be a real-valued function defined on a totally convex set $E\subseteq(M,g)$. Then:
	\begin{itemize}
		\item [1)] $f$ is convex on $E$ if
		$$f(\gamma_{\1}(s)) \leq (1-s)f(p) +sf(q), ~~~ \forall~ p,q \in E,~~ \gamma_{\1}\in \Gamma,~~\forall~s\in[0,1],$$
		where $\Gamma$ is the collection of geodesics joining $p$ and $q$.
		\item [2)] $f$ is strictly convex on $E$ if
		$$f(\gamma_{\1}(s)) < (1-s)f(p) +sf(q), ~~~ \forall~ p,q \in E, ~~p\neq q,~~ \gamma_{\1}\in \Gamma,~~\forall~s\in(0,1).$$
		\item [3)] $f$ is linear affine on $E$ if
		$$f(\gamma_{\1}(s)) = (1-s)f(p) +sf(q), ~~~ \forall~ p,q \in E, ~~ \gamma_{\1}\in \Gamma,~~\forall~s\in[0,1].$$
	\end{itemize}
\end{definition}

The following definition extends the Definition \ref{definitionconvexreal,p3} to an IVF.
\begin{definition}\cite{hilal} \label{definitionconvexinterval,p3}
	\rm Suppose $f:E\rightarrow\mathbb{I}$ be an IVF defined on a totally convex set $E\subseteq M$. Then:
	\begin{itemize}
		\item [1)] $f$ is convex on $E$ if
		$$f(\gamma_{\1}(s)) \leq^{\text{min}} (1-s)f(p) +sf(q), ~~~ \forall~ p,q \in E,~~ \gamma_{\1}\in \Gamma,~~\forall~s\in[0,1],$$
		where $\Gamma$ is the collection of geodesics joining $p$ and $q$.
		\item [2)] $f$ is strictly convex on $E$ if
		$$f(\gamma_{\1}(s)) <^{\text{min}} (1-s)f(p) +sf(q), ~~~ \forall~ p,q \in E, ~~p\neq q,~~ \gamma_{\1}\in \Gamma,~~\forall~s\in(0,1).$$
	\end{itemize}
\end{definition}

\begin{definition}
	\rm An IVF $f:E\rightarrow \mathbb{I}$, defined on a totally convex set $E\subseteq(M,g)$, is said to be linear affine on $E$ if
	$$f(\gamma_{\1}(s)) = (1-s)f(p) + sf(q), ~~~ \forall~ p,q \in E, ~~ \gamma_{\1}\in \Gamma,~~\forall~s\in[0,1].$$
\end{definition}

\begin{example} \label{example3.1,p2}
	\rm The set $S^n_{++}$ of $n \times n$ symmetric positive definite matrices with entries from $\mathbb{R}$ is a Hadamard manifold with Riemannian metric:
	$$g_p(X,Y) = Tr(p^{-1}Xp^{-1}Y), ~~~ \forall~ p \in S^n_{++}, ~~~ X,Y \in T_p(S^n_{++}).$$
	The unique minimal geodesic joining $p, q \in S^n_{++}$ is given by 
	$$\gamma(s) = p^{\frac{1}{2}}(p^{-\frac{1}{2}}qp^{-\frac{1}{2}})^sp^{\frac{1}{2}}, ~~~ \forall ~ s \in [0,1].$$
	For more details, one can refer to \cite{bacak,serge}.
	
	Define $f: S^n_{++} \rightarrow \mathbb{I}$, as follows
	$$f(p)= \langle \ln(\det(p)),~(\ln(\det(p)))^2\rangle.$$
	Now, for any $p,q \in S^n_{++}$, 
	\begin{align} \label{eq6a,p2} 
		f^c(\gamma(s)) &= \ln(\det(\gamma(s))) \notag\\
		&= \ln(\det(p^{\frac{1}{2}}(p^{-\frac{1}{2}}qp^{-\frac{1}{2}})^sp^{\frac{1}{2}})) \notag\\
		&= \ln(\det(p)) + s(\ln(\det(q)) - \ln(\det(p)))\notag\\
		&= (1-s)\ln(\det(p)) + s\ln(\det(q))\notag\\
		&= (1-s)f^c(p) + sf^c(q).
	\end{align}
	This shows that $f^c$ is linear affine on $S^n_{++}$. One can similarly show that
	\begin{equation} \label{eq6b,p2}
		f^w(\gamma(s)) < (1-s)f^w(p) + sf^w(q), ~~\forall~p,q \in S^n_{++},
	\end{equation}
	i.e., $f^w$ is strictly convex on $S^n_{++}$.\\
	From (\ref{eq6a,p2}) and (\ref{eq6b,p2}), together with order relation (\ref{orderrelation,p2}) and Lemma \ref{lemma4,p2}, we have
	$$f(\gamma(s)) <^{\text{min}} (1-s)f(p) + sf(q), ~~\forall~p,q \in S^n_{++}.$$
	So, $f$ is strictly convex and hence convex on $S^n_{++}$.
	However, $f$ fails to be convex on $S^n_{++}$ in the usual sense. For this, let $n=2,~ p=I_2,~ q=2I_2$, where $I_2$ is $2\times 2$ identity matrix, and $s=\frac{1}{2}$, then
	\begin{align*}
		f(\frac{1}{2}I_2 + \frac{1}{2}(2I_2)) &= ~\langle 0.811,~0.658\rangle\\
		\text{and} ~~ \frac{1}{2}f(I_2) + \frac{1}{2}f(2I_2) &=~ \langle 0.693,~0.48\rangle.
	\end{align*}
	Clearly, 
	$$f(\frac{1}{2}I_2 + \frac{1}{2}(2I_2)) >^{\text{min}} \frac{1}{2}f(I_2) + \frac{1}{2}f(2I_2).$$
	This shows $f$ fails to be convex on $S^n_{++}$ in the usual sense.
\end{example}

The following two lemmas in the sequel give sufficient conditions for an IVF to be convex on its totally convex domain $E\subseteq (M,g)$.

\begin{lemma} \label{lemma1*,p2}
	Suppose that an IVF $f:E \rightarrow \mathbb{I}$ with $f(p)=\langle f^c(p),f^w(p)\rangle$ be defined on a totally convex set $E\subseteq (M,g)$. If the center function $f^c$ is strictly convex on $E$, then the IVF $f$ is convex on $E$.
\end{lemma}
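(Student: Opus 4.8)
The plan is to reduce the interval inequality to a comparison of the center functions only, exploiting the structure of the total order $\leq^{\text{min}}$. First I would fix arbitrary $p,q\in E$, a geodesic $\gamma_{\1}\in\Gamma$ joining them, and a parameter $s\in[0,1]$, and then rewrite both sides of the target relation
$$f(\gamma_{\1}(s)) \leq^{\text{min}} (1-s)f(p)+sf(q)$$
in the center/half-width representation. The left-hand side is immediately $\langle f^c(\gamma_{\1}(s)),\,f^w(\gamma_{\1}(s))\rangle$. For the right-hand side, since $1-s\geq 0$ and $s\geq 0$, Lemma \ref{lemma4,p2} applies with $s_1=1-s$ and $s_2=s$, giving
$$(1-s)f(p)+sf(q)=\langle (1-s)f^c(p)+sf^c(q),\,(1-s)f^w(p)+sf^w(q)\rangle.$$

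The decisive step is then to invoke the strict convexity of $f^c$. For $p\neq q$ and $s\in(0,1)$, Definition \ref{definitionconvexreal,p3} yields
$$f^c(\gamma_{\1}(s)) < (1-s)f^c(p)+sf^c(q),$$
so the center of the left interval is strictly smaller than the center of the right interval. Under the order relation (\ref{orderrelation,p2}) this falls into the first branch, the case $t_1^c\neq t_2^c$, in which the half-width comparison is irrelevant; hence the interval inequality holds at once. The degenerate situations are then disposed of separately: when $s=0$ or $s=1$, or when $p=q$ (the constant geodesic), both sides coincide as intervals, so the relation holds by reflexivity of $\leq^{\text{min}}$ (equal centers forcing the second branch, with equal half-widths).

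I do not expect a genuinely hard step here; the only care required is to split the argument into the interior case, where strict convexity of $f^c$ does the work, and the boundary/diagonal cases, where equality holds trivially. The conceptual point worth emphasizing is that the completeness of $\leq^{\text{min}}$ is precisely what allows the strict convexity of the single scalar function $f^c$ to carry the entire interval inequality: once the centers are strictly ordered, the radius function $f^w$ is unconstrained, which explains why the hypothesis imposes no convexity condition whatsoever on $f^w$.
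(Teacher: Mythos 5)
Your proposal follows essentially the same route as the paper's own proof: strict convexity of $f^c$ gives a strict inequality of the centers, Lemma \ref{lemma4,p2} identifies the right-hand side as the interval $\langle (1-s)f^c(p)+sf^c(q),\,(1-s)f^w(p)+sf^w(q)\rangle$, and the first branch of the order relation (\ref{orderrelation,p2}) then yields $f(\gamma_{\1}(s)) \leq^{\text{min}} (1-s)f(p)+sf(q)$ with the half-widths playing no role. If anything you are more careful than the paper, which passes from the strict inequality valid for $p\neq q$ and $s\in(0,1)$ to the conclusion for all $s\in[0,1]$ without comment, whereas you explicitly dispose of the boundary cases $s\in\{0,1\}$ and $p=q$ by reflexivity of $\leq^{\text{min}}$.
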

\begin{proof}
	Since the center function $f^c$ is strictly convex on $E$, we have for any $p,q \in E$ that
	$$f^c(\gamma_{\1}(s)) < (1-s)f^c(p) +sf^c(q), ~~~ p\neq q,~~ \gamma_{\1}\in \Gamma,~~\forall~s\in(0,1).$$
	This from order relation (\ref{orderrelation,p2}), yields that
	$$\langle f^c(\gamma_{\1}(s)),~f^w(\gamma_{\1}(s))\rangle \leq^{\text{min}} \langle  (1-s)f^c(p) +sf^c(q),~(1-s)f^w(p) +sf^w(q)\rangle,$$
	which by Lemma \ref{lemma4,p2}, gives
	$$f(\gamma_{\1}(s)) \leq^{\text{min}} (1-s)f(p)+sf(q),~~\forall~p,q \in E,~~\gamma_{\1}\in \Gamma,~~\forall~s\in[0,1].$$
	\end{proof}

\begin{lemma}
	Suppose that an IVF $f:E \rightarrow \mathbb{I}$ with $f(p)=\langle f^c(p),f^w(p)\rangle$ be defined on a totally convex set $E\subseteq (M,g)$. If the center function $f^c$ is linear affine on $E$ and the half-width function $f^w$ is convex on $E$, then the IVF $f$ is convex on $E$.
\end{lemma}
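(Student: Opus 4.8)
The plan is to follow the template of the proof of Lemma~\ref{lemma1*,p2}, but to notice that linear affinity of the center function makes the two centers coincide \emph{exactly}; this forces the comparison into the equality-of-centers branch of the order relation~(\ref{orderrelation,p2}), where the hypothesis on $f^w$ is precisely what is required.

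First I would fix arbitrary $p,q\in E$, a geodesic $\gamma_{\1}\in\Gamma$ joining $p$ and $q$, and $s\in[0,1]$. Since $1-s\ge 0$ and $s\ge 0$, Lemma~\ref{lemma4,p2} expresses the right-hand side as
$$(1-s)f(p)+sf(q)=\langle (1-s)f^c(p)+sf^c(q),~(1-s)f^w(p)+sf^w(q)\rangle,$$
while by definition $f(\gamma_{\1}(s))=\langle f^c(\gamma_{\1}(s)),~f^w(\gamma_{\1}(s))\rangle$. I would then compare the two intervals coordinate by coordinate. The linear affinity of $f^c$ gives the center equality $f^c(\gamma_{\1}(s))=(1-s)f^c(p)+sf^c(q)$, so the two intervals have the same center. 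The convexity of $f^w$ gives the half-width inequality $f^w(\gamma_{\1}(s))\le (1-s)f^w(p)+sf^w(q)$.

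Because the centers are equal, the order relation~(\ref{orderrelation,p2}) reduces to its second case, namely the comparison of half-widths, and the half-width inequality just obtained is exactly the condition it demands; hence $f(\gamma_{\1}(s))\leq^{\text{min}}(1-s)f(p)+sf(q)$ for every $s\in[0,1]$, which is the convexity of $f$ in the sense of Definition~\ref{definitionconvexinterval,p3}. There is no serious obstacle here: the only point that needs care is recognizing that equality of centers (rather than a strict inequality) is what selects the correct branch of the order relation. This is exactly where the argument diverges from Lemma~\ref{lemma1*,p2}: there the strict convexity of $f^c$ produced a strict center inequality and landed in the first branch, so no hypothesis on $f^w$ was needed, whereas here the center equality makes the half-width convexity of $f^w$ indispensable.
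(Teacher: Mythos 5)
Your proposal is correct and is exactly the argument the paper intends: the paper's proof is simply the remark that it is ``similar to Lemma~\ref{lemma1*,p2}'', i.e.\ decompose both sides via Lemma~\ref{lemma4,p2} and compare with the order relation~(\ref{orderrelation,p2}), which is precisely what you do. Your observation that linear affinity forces equality of centers, so the comparison lands in the half-width branch where convexity of $f^w$ is needed, is the correct (and only) adaptation required.
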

\begin{proof}
	The proof is similar to Lemma \ref{lemma1*,p2}.
	\end{proof}

The following lemma gives necessary condition for an IVF function to be convex on $E \subseteq (M,g)$.

\begin{lemma}
	If an IVF $f:E \rightarrow \mathbb{I}$ with $f(p)=\langle f^c(p),f^w(p)\rangle$, defined on a totally convex set $E\subseteq (M,g)$, is convex on $E$, then the center function $f^c$ is convex on $E$.
\end{lemma}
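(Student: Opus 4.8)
The plan is to unpack the definition of interval-valued convexity and simply read off the center component, exploiting the fact that the order relation $\leq^{\text{min}}$ always dominates the ordinary order on centers. Concretely, since $f$ is convex on $E$ in the sense of Definition \ref{definitionconvexinterval,p3}, for arbitrary $p,q\in E$, any geodesic $\gamma_{\1}\in\Gamma$, and any $s\in[0,1]$ we have
$$f(\gamma_{\1}(s)) \leq^{\text{min}} (1-s)f(p)+sf(q).$$
The whole argument consists of evaluating the centers of both sides of this inequality and invoking the structure of (\ref{orderrelation,p2}).

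First I would compute the center of the right-hand side. Because $1-s\geq 0$ and $s\geq 0$ for $s\in[0,1]$, Lemma \ref{lemma4,p2} gives
$$(1-s)f(p)+sf(q) = \langle (1-s)f^c(p)+sf^c(q),~(1-s)f^w(p)+sf^w(q)\rangle,$$
so its center is exactly $(1-s)f^c(p)+sf^c(q)$, while the center of $f(\gamma_{\1}(s))$ is $f^c(\gamma_{\1}(s))$ by the very definition of an IVF. The key observation is then that the order relation (\ref{orderrelation,p2}) always implies the weak inequality on centers: indeed, $T_1\leq^{\text{min}}T_2$ holds either because $t_1^c<t_2^c$ or because $t_1^c=t_2^c$ (with $t_1^w\leq t_2^w$), and in both cases $t_1^c\leq t_2^c$. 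Applying this to the displayed inequality yields
$$f^c(\gamma_{\1}(s)) \leq (1-s)f^c(p)+sf^c(q),$$
for all $p,q\in E$, all $\gamma_{\1}\in\Gamma$, and all $s\in[0,1]$, which is precisely the convexity of $f^c$ on $E$ in the sense of Definition \ref{definitionconvexreal,p3}.

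This argument is essentially immediate once the order relation is parsed correctly, so I do not anticipate a genuine obstacle. The only point that deserves a moment's care is recording that $\leq^{\text{min}}$ collapses to $\leq$ on centers regardless of which case of (\ref{orderrelation,p2}) is active; the half-width information is simply discarded in passing to centers, which is exactly why we recover convexity of $f^c$ but obtain no conclusion about $f^w$. It is worth noting, too, that the hypothesis cannot be strengthened to yield \emph{strict} convexity of $f^c$, since $\leq^{\text{min}}$ may hold via equality of centers; this is consistent with the sufficient conditions in the preceding lemmas, where strict convexity of $f^c$ (rather than mere convexity) was needed to guarantee convexity of $f$.
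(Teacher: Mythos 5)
Your proof is correct and is exactly the argument the paper intends: its own proof is the one-line remark that the result ``follows directly from order relation (\ref{orderrelation,p2}) and Lemma \ref{lemma4,p2},'' and you have simply written out that argument in full, using Lemma \ref{lemma4,p2} to identify the center of $(1-s)f(p)+sf(q)$ and noting that $\leq^{\text{min}}$ always implies the weak inequality of centers. No gaps; the expanded write-up is, if anything, more complete than the paper's.
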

\begin{proof}
	The proof follows directly from order relation (\ref{orderrelation,p2}) and Lemma \ref{lemma4,p2}.
	\end{proof}

We remark here that convexity of an IVF $f$ on $E$ doesn't necessarily imply the convexity of half-width function $f^w$. For counter example, one can refer to Example \ref{example6*,p2}.

The next definition gives the notion of convexity of an IVF in terms of its central and half-width functions. We call such convexity as cw-convexity.
\begin{definition} \rm 
	An IVF $f:E\rightarrow\mathbb{I}$ with $f(p)=\langle f^c(p),~f^w(p)\rangle$, defined on a totally convex set $E\subseteq (M,g)$, is cw-convex (strictly cw-convex) on $E$ if $f^c$ and $f^w$ are convex (strictly convex) on $E$.
\end{definition}

The following lemma shows that cw-convexity implies the convexity of an IVF.
\begin{lemma} \label{lemma3.4,p2}
	Suppose that an IVF $f:E \rightarrow \mathbb{I}$ with $f(p)=\langle f^c(p),f^w(p)\rangle$, defined on a totally convex set $E\subseteq (M,g)$, is cw-convex on $E$, then $f$ is convex on $E$.
\end{lemma}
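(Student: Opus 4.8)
The plan is to reduce the interval inequality in Definition \ref{definitionconvexinterval,p3} directly to the two scalar convexity inequalities supplied by cw-convexity, using a case split dictated by the order relation (\ref{orderrelation,p2}). Fix $p,q\in E$, a geodesic $\gamma_{\1}\in\Gamma$ joining them, and $s\in[0,1]$. Since $f^c$ is convex we have $f^c(\gamma_{\1}(s)) \leq (1-s)f^c(p)+sf^c(q)$, and since $f^w$ is convex we have $f^w(\gamma_{\1}(s)) \leq (1-s)f^w(p)+sf^w(q)$.

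Next I would put the right-hand side into center--half-width form. As $s\in[0,1]$, both coefficients $1-s$ and $s$ are nonnegative, so Lemma \ref{lemma4,p2} gives
$$(1-s)f(p)+sf(q) = \langle (1-s)f^c(p)+sf^c(q),\ (1-s)f^w(p)+sf^w(q)\rangle,$$
whereas $f(\gamma_{\1}(s)) = \langle f^c(\gamma_{\1}(s)),\, f^w(\gamma_{\1}(s))\rangle$ by definition. Comparing the two intervals then reduces to verifying the defining condition of $\leq^{\text{min}}$ on their centers and half-widths. If $f^c(\gamma_{\1}(s)) < (1-s)f^c(p)+sf^c(q)$, the centers are distinct and the smaller one belongs to $f(\gamma_{\1}(s))$, so the first clause of (\ref{orderrelation,p2}) yields $f(\gamma_{\1}(s)) \leq^{\text{min}} (1-s)f(p)+sf(q)$ with no reference to the half-widths. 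If instead the centers coincide, the second clause requires only the half-width inequality, which is exactly the convexity of $f^w$ recorded above. In either case the relation holds, and since $p,q,\gamma_{\1},s$ were arbitrary, $f$ is convex on $E$.

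The one point I expect to matter is recognizing that the equal-center case genuinely arises here and that half-width convexity is precisely what handles it; this is the structural difference from Lemma \ref{lemma1*,p2}, where strict convexity of $f^c$ excludes the equal-center case and the half-width hypothesis is unnecessary. Apart from that observation, the argument is a routine transcription through Lemma \ref{lemma4,p2} and presents no real obstacle.
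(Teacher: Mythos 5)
Your proof is correct and follows essentially the same route as the paper's: both arguments apply Lemma \ref{lemma4,p2} to write $(1-s)f(p)+sf(q)$ in center--half-width form and then split into the strict-center case (handled by the first clause of (\ref{orderrelation,p2})) and the equal-center case (handled by convexity of $f^w$ via the second clause); the paper merely packages this case split as a set $T=\{s\in[0,1]: f^c(\gamma_{\1}(s))=(1-s)f^c(p)+sf^c(q)\}$ rather than doing it pointwise as you do. Your closing remark correctly identifies why the half-width hypothesis is needed here but not in Lemma \ref{lemma1*,p2}.
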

\begin{proof}
	Since $f$ is cw-convex on $E$, both $f^c$ and $f^w$ are convex on $E$. For any $p,q \in E$ and any geodesic $\gamma_{\1}(s), s\in [0,1],$ with $\gamma_{\1}(0)=p$ and $\gamma_{\1}(1)=q$, we have
	\begin{equation} \label{eq6,p2}
		\begin{split}
			f^c(\gamma_{\1}(s)) &\leq (1-s)f^c(p) + sf^c(q)\\
			\text{and~~~} f^w(\gamma_{\1}(s)) &\leq (1-s)f^w(p) + sf^w(q)
		\end{split}
	\end{equation}
	Let $T=\{s \in [0,1]: f^c(\gamma_{\1}(s)) = (1-s)f^c(p) + sf^c(q)\}$. Then from (\ref{eq6,p2}), we can deduce that
	\begin{align*}
		f^c(\gamma_{\1}(s)) &< (1-s)f^c(p) + sf^c(q), ~~~\forall ~ s\in[0,1]\setminus T\\
		\text{and~~~} f^w(\gamma_{\1}(s)) &\leq (1-s)f^w(p) + sf^w(q),~~\forall ~ s\in T,
	\end{align*}
	which together with Lemma \ref{lemma4,p2} and order relation (\ref{orderrelation,p2}), yields 
	$$f(\gamma_{\1}(s)) \leq^{\text{min}} (1-s)f(p) + sf(q),$$
	Hence, $f$ is convex.
	\end{proof}

The following example illustrates that the reverse implication of Lemma \ref{lemma3.4,p2} is not possible in general.

\begin{example} \label{example6*,p2} \rm
	Let $\mathit{M}=\{e^{i\theta} : \theta \in \mathbb{R}\}$ be a non-compact $1-$dimensional Riemannian manifold$^*$.\let\thefootnote\relax\footnotetext{\footnotesize
		$^*$ In this case, we assume that the manifold \( \mathit{M} = \{e^{i\theta} : \theta \in \mathbb{R}\} \) is not periodic, meaning that distinct values of \(\theta\) correspond to distinct points in \( \mathit{M} \). As a result, \( \mathit{M} \) is diffeomorphic to \(\mathbb{R}\) rather than the unit circle \( S^1 =\{(p,q)\in \mathbb{R}^2: p^2+q^2=1\} \). Unlike the standard compact circle where \(\theta\) is identified modulo \(2\pi\), our construction treats \(\theta\) as a global coordinate extending infinitely in both directions.
	} 
	The geodesic segment $\gamma_{\1}(s)$ joining $p=e^{i\theta}$ and $q=e^{i\phi}$ is given by 
	$$\gamma_{\1}(s)=e^{i((1-s)\theta+s\phi)}.$$
	Define $f:M \rightarrow \mathbb{I}$, as
	$$f(p) = \langle \theta^2,~-\theta^2+5\pi^2\rangle, ~~ p=e^{i\theta} \in M.$$
	The center function $f^c(p)=\theta^2$ is strictly convex on $M$ which is evident from the following:\\
	For any $p=e^{i\theta}, q=e^{i\phi}\in M$,~ $\gamma_{\1}(s), ~s\in[0,1],$ 
	\begin{align*}
		f^c(\gamma_{\1}(s)) &= f^c(e^{i((1-s)\theta+s\phi)})\\
		&= \left((1-s)\theta+s\phi\right)^2\\
		&< (1-s)\theta^2+s\phi^2~~ (\because ~f(p)=p^2~ \text{is strictly convex on}~\mathbb{R})\\
		&= (1-s)f^c(p)+sf^c(q).
	\end{align*}
	Also, one can similarly show that $f^w(p)=-\theta^2 +5\pi^2$ is not convex on $E$. So, $f$ is not cw-convex.\\
	However, from Lemma \ref{lemma1*,p2}, it follows that $f$ is convex.
\end{example}

Next, we present some of the basic results related to convexity of an IVF on Riemannian Manifolds.
\begin{lemma}
	An IVF $f:E\rightarrow \mathbb{I}$ with $f(p) = \langle f^c(p),~f^w(p)\rangle$, defined on a totally convex set $E\subseteq (M,g)$, is convex on $E$ if and only if $\forall ~p,q \in E$ the function $f \circ \gamma_{\1}$ is convex on [0,1], where $\gamma_{\1}$ is the geodesic segment joining $p$ and $q$.
\end{lemma}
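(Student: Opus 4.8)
The plan is to establish the biconditional by unwinding Definition~\ref{definitionconvexinterval,p3} on both sides, the point being that ``$f\circ\gamma_{pq}$ convex on $[0,1]$'' is itself an instance of that definition, applied to the IVF $f\circ\gamma_{pq}$ on the totally convex subset $[0,1]$ of the flat manifold $\mathbb{R}$, whose geodesics are the affine segments $\lambda\mapsto(1-\lambda)s_1+\lambda s_2$. Concretely, writing $h:=f\circ\gamma_{pq}$, convexity of $h$ on $[0,1]$ reads
$$h\big((1-\lambda)s_1+\lambda s_2\big)\ \leq^{\text{min}}\ (1-\lambda)\,h(s_1)+\lambda\,h(s_2),\qquad s_1,s_2\in[0,1],\ \lambda\in[0,1],$$
and I note at the outset that $h(0)=f(p)$ and $h(1)=f(q)$.

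For the direction ($\Leftarrow$), assume $h=f\circ\gamma_{pq}$ is convex on $[0,1]$ for every pair $p,q\in E$ and every geodesic $\gamma_{pq}$ joining them. Fixing $p,q,\gamma_{pq}$ and $s\in[0,1]$, I would simply take $s_1=0$, $s_2=1$, $\lambda=s$ in the displayed inequality to obtain $h(s)\leq^{\text{min}}(1-s)h(0)+s\,h(1)$, i.e. $f(\gamma_{pq}(s))\leq^{\text{min}}(1-s)f(p)+s f(q)$, which is exactly the convexity of $f$ in the sense of Definition~\ref{definitionconvexinterval,p3}. This direction is immediate and uses no geometry beyond evaluating at the endpoints.

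For the direction ($\Rightarrow$), assume $f$ is convex on $E$, fix $p,q\in E$ and a geodesic $\gamma:=\gamma_{pq}$, and fix $s_1,s_2\in[0,1]$ and $\lambda\in[0,1]$. The strategy is to reinterpret the relevant sub-arc of $\gamma$ as a bona fide geodesic between the interior points. Set $a:=\gamma(s_1)$ and $b:=\gamma(s_2)$; by total convexity of $E$ the whole trace of $\gamma$ lies in $E$, so $a,b\in E$. Define $\sigma(\lambda):=\gamma\big((1-\lambda)s_1+\lambda s_2\big)$; because an affine reparametrisation of a geodesic again satisfies $\nabla_{\sigma'}\sigma'=0$, the curve $\sigma$ is a geodesic of $M$ joining $a=\sigma(0)$ and $b=\sigma(1)$. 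Since the convexity of $f$ is required to hold for \emph{every} geodesic joining two points of $E$, it applies to $\sigma$, giving $f(\sigma(\lambda))\leq^{\text{min}}(1-\lambda)f(a)+\lambda f(b)$. Substituting $\sigma(\lambda)=\gamma\big((1-\lambda)s_1+\lambda s_2\big)$, $f(a)=h(s_1)$ and $f(b)=h(s_2)$ yields precisely the convexity inequality for $h$ on $[0,1]$.

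The only nontrivial step — and the one I would treat most carefully — is the geometric claim that $\sigma$ is again a geodesic of $M$ joining $a$ and $b$. This rests on the standard fact that a sub-arc of a geodesic, affinely reparametrised onto $[0,1]$, still solves the geodesic equation $\nabla_{\gamma'}\gamma'=0$, together with total convexity of $E$ ensuring that $a,b$ (and the connecting arc) remain in $E$, so that Definition~\ref{definitionconvexinterval,p3} is applicable to the pair $(a,b)$ and the geodesic $\sigma$. Once this is secured, both implications collapse to direct substitutions into the defining inequality, with the order relation $\leq^{\text{min}}$ carried along unchanged and no interval arithmetic required.
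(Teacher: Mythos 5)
Your proposal is correct and follows essentially the same route as the paper: the ($\Leftarrow$) direction by evaluating the convexity inequality of $f\circ\gamma$ at the endpoints $s_1=0,\ s_2=1$, and the ($\Rightarrow$) direction by affinely reparametrising the sub-arc of $\gamma$ over $[s_1,s_2]$ (the paper's $\alpha(s_3)=\gamma(s_1+s_3(s_2-s_1))$ is exactly your $\sigma$) and applying the convexity of $f$ to that geodesic. Your added care in justifying that $\sigma$ solves $\nabla_{\sigma'}\sigma'=0$ and stays in $E$ only makes explicit what the paper leaves implicit.
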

\begin{proof}
	If $f \circ \gamma_{\1}$ is convex on [0,1], then, for any $s_1,s_2,s_3 \in[0,1]$, we have
	$$(f \circ \gamma_{\1})((1-s_3)s_1+s_3 s_2) \leq^{\text{min}} (1-s_3)(f \circ \gamma_{\1})(s_1)+s_3 (f \circ \gamma_{\1})(s_2).$$
	In particular for $s_1=0, ~ s_2=1$, we have
	\begin{align*}
		&(f \circ \gamma_{\1})(s_3) &&\hspace{-0.5cm}~ \leq^{\text{min}} (1-s_3)(f \circ \gamma_{\1})(0)+s_3 (f \circ \gamma_{\1})(1),\\
		&i.e.,~&& \\ 
		&f (\gamma_{\1}(s_3))&&\hspace{-0.5cm}~ \leq^{\text{min}} (1-s_3)f(p)+s_3 f(q), ~~\forall~p,q\in E,~\gamma_{\1}(s_3) \in \Gamma \text{and} ~s_3 \in [0,1],
	\end{align*}
	where $\Gamma$ is the collection of all geodesics joining $p$ and $q$.
	Hence, $f$ is convex on $E$.
	
	Conversely, suppose that $f$ is a convex function and $\gamma_{\1}(s),~ s\in[0,1],$  is the geodesic joining $p$ and $q$, then the restriction of $\gamma_{\1}$ to $[s_1,s_2]\subseteq[0,1]$ joins the points $\gamma_{\1}(s_1)$ and $\gamma_{\1}(s_2)$. We parameterize this restriction as,
	$$\alpha(s_3) = \gamma_{\1}(s_1 +s_3(s_2-s_1)), ~~~s_3 \in [0,1].$$
	From convexity of $f$, we have
	$$(f \circ \alpha)(s_3) \leq^{\text{min}} (1-s_3)(f \circ \alpha)(0) +s_3 (f \circ \alpha)(1)$$
	$$\Rightarrow~~ (f\circ\gamma_{\1})((1-s_3)s_1 +s_3 s_2) \leq^{\text{min}} (1-s_3)(f\circ\gamma_{\1})(s_1) + s_3 (f\circ\gamma_{\1})(s_2).$$
	Since $s_1, s_2 \in [0,1]$ are arbitrary, we conclude that $f\circ \gamma_{\1}$ is convex on [0,1].
	\end{proof}

\begin{lemma}
	Suppose that an IVF $f:E\rightarrow \mathbb{I}$ with $f(p) = \langle f^c(p),~f^w(p)\rangle$, defined on a totally convex set $E\subseteq (M,g)$, is convex on $E$, then the lower level set
	$$D:=\{p\in E: f(p) \leq^{\text{min}}B\},$$
	where $B$ is an interval in $\mathbb{I}$, is totally convex subset of $E$.
\end{lemma}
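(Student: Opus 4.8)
The plan is to show directly that any geodesic joining two points of $D$ remains in $D$, which is precisely the definition of total convexity. First I would fix $p,q \in D$, an arbitrary geodesic $\gamma_{\1} \in \Gamma$ joining them, and a parameter $s \in [0,1]$. Since $D \subseteq E$ and $E$ is totally convex, the point $\gamma_{\1}(s)$ already lies in $E$; so the only thing left to verify is the defining inequality $f(\gamma_{\1}(s)) \leq^{\text{min}} B$.

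Next I would feed the membership conditions $f(p) \leq^{\text{min}} B$ and $f(q) \leq^{\text{min}} B$ into Lemma \ref{lemma2,p2}. Scaling these by the nonnegative scalars $1-s$ and $s$ via part {\bf (i)} gives $(1-s)f(p) \leq^{\text{min}} (1-s)B$ and $sf(q) \leq^{\text{min}} sB$, and adding them via part {\bf (ii)} yields
$$(1-s)f(p) + sf(q) \leq^{\text{min}} (1-s)B + sB.$$
The key simplification is that the right-hand side collapses to $B$: writing $B = \langle b^c,b^w \rangle$ and applying Lemma \ref{lemma4,p2} with $s_1 = 1-s \geq 0$ and $s_2 = s \geq 0$, one gets $(1-s)B + sB = \langle b^c,\, (1-s)b^w + sb^w \rangle = \langle b^c, b^w \rangle = B$. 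Hence $(1-s)f(p) + sf(q) \leq^{\text{min}} B$.

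Finally, convexity of $f$ on $E$ supplies $f(\gamma_{\1}(s)) \leq^{\text{min}} (1-s)f(p) + sf(q)$, and chaining this with the previous inequality through transitivity of the total order $\leq^{\text{min}}$ (which holds because (\ref{orderrelation,p2}) is the lexicographic order comparing center first and half-width second) gives $f(\gamma_{\1}(s)) \leq^{\text{min}} B$. Thus $\gamma_{\1}(s) \in D$ for every $s \in [0,1]$ and every geodesic in $\Gamma$, so $D$ is a totally convex subset of $E$. I do not anticipate a genuine obstacle here; the one point worth stating carefully is the collapse $(1-s)B + sB = B$, since the half-width arithmetic in Lemma \ref{lemma4,p2} carries absolute values and one must use $s \in [0,1]$ so that $|1-s| = 1-s$ and $|s| = s$. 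The reliance on transitivity of $\leq^{\text{min}}$ is equally essential and should be flagged explicitly, as the argument is a two-step chaining of interval inequalities.
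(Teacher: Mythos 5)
Your proposal is correct and follows essentially the same route as the paper's own proof: both feed $f(p)\leq^{\text{min}}B$ and $f(q)\leq^{\text{min}}B$ into parts \textbf{(i)} and \textbf{(ii)} of Lemma \ref{lemma2,p2}, combine with the convexity inequality $f(\gamma_{\1}(s)) \leq^{\text{min}} (1-s)f(p)+sf(q)$, and conclude via $(1-s)B+sB = B$. The only difference is that you spell out details the paper leaves implicit, namely the justification of the collapse $(1-s)B+sB=B$ through Lemma \ref{lemma4,p2} and the explicit appeal to transitivity of $\leq^{\text{min}}$.
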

\begin{proof}
	Let $p,q \in D$ be arbitrary, then $f(p) \leq^{\text{min}}B$ and $f(q) \leq^{\text{min}}B$. Let $\gamma_{\1}(s), s\in[0,1]$ be the geodesic joining $p$ and $q$. Since $f$ is convex, we have
	$$f(\gamma_{\1}(s)) \leq^{\text{min}}(1-s)f(p)+sf(q)$$
	Using parts {\bf (i) and (ii)} of Lemma \ref{lemma2,p2}, the above expression yields
	$$f(\gamma_{\1}(s)) \leq^{\text{min}}(1-s)B+sB =^{\text{min}} B.$$
	This shows $D$ is totally convex subset of $E$.
	\end{proof}

\begin{lemma}
	Suppose that IVFs $f,g:E\rightarrow \mathbb{I}$ with $f(p) = \langle f^c(p),~f^w(p)\rangle$ and $g(p) = \langle g^c(p),~g^w(p)\rangle$, defined on a totally convex set $E\subseteq (M,g)$, are convex on $E$, then $\alpha f + \beta g$ is also convex on $E$, for any $\alpha,~\beta \geq0$.
\end{lemma}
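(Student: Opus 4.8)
The plan is to reduce the claim to the convexity inequalities already available for $f$ and $g$ separately, then scale and add them using the order-relation lemmas. First I would fix arbitrary $p,q \in E$, a geodesic $\gamma_{\1}(s)$, $s\in[0,1]$, joining them, and scalars $\alpha,\beta \geq 0$. By Definition \ref{definitionconvexinterval,p3}, convexity of $f$ and $g$ gives
$$f(\gamma_{\1}(s)) \leq^{\text{min}} (1-s)f(p)+sf(q) \quad\text{and}\quad g(\gamma_{\1}(s)) \leq^{\text{min}} (1-s)g(p)+sg(q).$$

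Next I would multiply the first relation by $\alpha\geq 0$ and the second by $\beta\geq 0$, invoking part {\bf (i)} of Lemma \ref{lemma2,p2} to preserve the order relation, which yields
$$\alpha f(\gamma_{\1}(s)) \leq^{\text{min}} \alpha\big[(1-s)f(p)+sf(q)\big] \quad\text{and}\quad \beta g(\gamma_{\1}(s)) \leq^{\text{min}} \beta\big[(1-s)g(p)+sg(q)\big].$$
Adding these two relations via part {\bf (ii)} of Lemma \ref{lemma2,p2} gives
$$(\alpha f+\beta g)(\gamma_{\1}(s)) \leq^{\text{min}} \alpha(1-s)f(p)+\alpha s f(q)+\beta(1-s)g(p)+\beta s g(q).$$

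The remaining task is to recognize the right-hand side as $(1-s)(\alpha f+\beta g)(p)+s(\alpha f+\beta g)(q)$, and this is the step where I would be most careful, since interval arithmetic is not distributive in general and the half-width component carries an absolute value under scalar multiplication. The key observation is that all four coefficients $\alpha(1-s),\,\alpha s,\,\beta(1-s),\,\beta s$ are nonnegative, because $\alpha,\beta\geq 0$ and $s\in[0,1]$; hence Lemma \ref{lemma4,p2} lets me collect the center and half-width pieces additively with no sign changes, so that I may regroup
$$\alpha(1-s)f(p)+\beta(1-s)g(p) = (1-s)\big[\alpha f(p)+\beta g(p)\big] = (1-s)(\alpha f+\beta g)(p),$$
and likewise for the two terms carrying the factor $s$. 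Substituting this regrouping back produces exactly the convexity inequality for $\alpha f+\beta g$, and since $p,q,s$ were arbitrary the conclusion follows. The main obstacle is therefore not conceptual but bookkeeping in the regrouping step, which works precisely because the nonnegativity of every scalar multiplier lets Lemma \ref{lemma4,p2} apply without any absolute-value correction.
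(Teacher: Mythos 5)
Your proof is correct and takes essentially the same route as the paper, which disposes of this lemma in one line by citing parts \textbf{(i)} and \textbf{(ii)} of Lemma \ref{lemma2,p2} --- precisely the scale-and-add argument you carry out. Your additional care with the regrouping step via Lemma \ref{lemma4,p2}, valid because all four coefficients $\alpha(1-s)$, $\alpha s$, $\beta(1-s)$, $\beta s$ are nonnegative, simply makes explicit the bookkeeping the paper leaves implicit.
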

\begin{proof}
	The proof follows directly from parts {\bf (i) and (ii)} of Lemma \ref{lemma2,p2}.
	\end{proof}
In the following lemma, we discuss an important characterization of interval-valued convex function in terms of its epigraph.
\begin{lemma}
	An IVF $f:E\rightarrow \mathbb{I}$ with $f(p) = \langle f^c(p),~f^w(p)\rangle$, defined on a totally convex set $E\subseteq (M,g)$, is convex on $E$ if and only if $\forall ~p,q \in E$ its epigraph 
	$$Epi(f) := \{(p,B) \in E\times \mathbb{I} : f(p) \leq^{\text{min}} B\},$$
	is a convex set.
\end{lemma}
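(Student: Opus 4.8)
The plan is to prove both implications directly from the definitions, with the forward direction powered by Lemma \ref{lemma2,p2}. The first thing I would do is pin down what convexity means for a subset of the product $E \times \mathbb{I}$, since the excerpt only defines total convexity for subsets of $(M,g)$ and not for the product. The natural and intended notion is: a set $C \subseteq E \times \mathbb{I}$ is convex if, for every $(p,B_1),(q,B_2) \in C$, every geodesic $\gamma_{\1}$ joining $p$ and $q$, and every $s \in [0,1]$, the point $(\gamma_{\1}(s),\,(1-s)B_1 + sB_2)$ again lies in $C$. Here the $E$-component moves along a geodesic and the $\mathbb{I}$-component is combined by the interval arithmetic fixed in the Preliminaries, matching the structure used throughout the paper and quantifying over the same family $\Gamma$ that appears in the definition of convexity of $f$.

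For the forward implication, suppose $f$ is convex and take $(p,B_1),(q,B_2) \in Epi(f)$, so that $f(p) \leq^{\text{min}} B_1$ and $f(q) \leq^{\text{min}} B_2$. Fixing a geodesic $\gamma_{\1}$ and $s \in [0,1]$, convexity of $f$ gives $f(\gamma_{\1}(s)) \leq^{\text{min}} (1-s)f(p) + sf(q)$. Applying part (i) of Lemma \ref{lemma2,p2} with the nonnegative scalars $1-s$ and $s$ to the two domination hypotheses, and then combining the results with part (ii), yields $(1-s)f(p)+sf(q) \leq^{\text{min}} (1-s)B_1 + sB_2$. Transitivity of the total order $\leq^{\text{min}}$ then gives $f(\gamma_{\1}(s)) \leq^{\text{min}} (1-s)B_1 + sB_2$, i.e.\ $(\gamma_{\1}(s),(1-s)B_1+sB_2) \in Epi(f)$, which is exactly what convexity of $Epi(f)$ requires.

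For the converse I would feed the two canonical epigraph points back into the definition. Since $\leq^{\text{min}}$ is reflexive, $f(p) \leq^{\text{min}} f(p)$ and $f(q) \leq^{\text{min}} f(q)$, so $(p,f(p))$ and $(q,f(q))$ both lie in $Epi(f)$. Convexity of $Epi(f)$ then forces $(\gamma_{\1}(s),(1-s)f(p)+sf(q)) \in Epi(f)$ for every geodesic $\gamma_{\1}$ and every $s \in [0,1]$, which unwinds to $f(\gamma_{\1}(s)) \leq^{\text{min}} (1-s)f(p)+sf(q)$ --- precisely the convexity inequality for $f$.

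I expect no serious obstacle: this is the classical epigraph characterization transported to the interval-valued, geodesic setting, and it reduces to routine bookkeeping. The two points demanding care are, first, fixing the notion of product convexity so that both directions range over the same collection $\Gamma$ of geodesics, and second, the forward direction's passage from the two componentwise dominations to a single combined one, which is handled cleanly by parts (i) and (ii) of Lemma \ref{lemma2,p2} together with the reflexivity and transitivity of the total order $\leq^{\text{min}}$.
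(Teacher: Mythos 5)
Your proof is correct and takes essentially the same route as the paper's: the forward direction combines convexity of $f$ with parts \textbf{(i)} and \textbf{(ii)} of Lemma \ref{lemma2,p2} and transitivity of $\leq^{\text{min}}$ to place $(\gamma_{\1}(s),(1-s)B_1+sB_2)$ in $Epi(f)$, and the converse feeds the points $(p,f(p))$ and $(q,f(q))$ back into the convexity of the epigraph. Your explicit definition of convexity for subsets of $E\times\mathbb{I}$ and the spelled-out transitivity step only make precise what the paper leaves implicit.
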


\begin{proof}
	Suppose $f$ is convex on $E$ and $(p,B),(q,C) \in Epi(f)$, then
	$$f(p)\leq^{\text{min}} B ~~~\text{and}~~~ f(q)\leq^{\text{min}} C.$$
	By convexity of $f$ on $E$,
	$$f(\gamma_{\1}(s)) \leq^{\text{min}}(1-s)f(p)+sf(q),$$
	where $\gamma_{\1}(s), s\in[0,1]$ is a geodesic with $\gamma_{\1}(0)=p$ and $\gamma_{\1}(1)=q$. Using Lemma \ref{lemma2,p2}, we get
	$$f(\gamma_{\1}(s)) \leq^{\text{min}}(1-s)B+sC,$$
	$$\Rightarrow ~~(\gamma_{\1}(s), ~(1-s)B+sC) \in Epi(f).$$
	So, $Epi(f)$ is a totally convex set.
	
	Conversely, assume that $Epi(f)$ is totally convex. Let $p,q\in E$, then $(p,f(p)),~(q,f(q))\in Epi(f)$. By hypothesis, we have
	$$(\gamma_{\1}(s),~ (1-s)f(p)+sf(q))\in Epi(f),$$
	where, $\gamma_{\1}(s), s\in[0,1]$ is a geodesic  with $\gamma_{\1}(0)=p$ and $\gamma_{\1}(1)=q$,
	$$\Rightarrow~ f(\gamma_{\1}(s)) \leq^{\text{min}}(1-s)f(p)+sf(q).$$
	This yields that $f$ is convex on $E$.
	\end{proof}

\section{Convexity of an IVF at a point}

In this section, we provide some fundamental definitions and results related to an IVF which is convex at a point.

\begin{definition}
	\rm \cite{udriste} A set $E\subseteq (M,g)$ is said to be star-shaped at $p_0 \in E$ if $\gamma_{\2}(s)\in E$ whenever $p\in E$ and $s \in (0,1)$, where $\gamma_{\2}$ is any geodesic in $E$ joining $p_0$ with $p$.
\end{definition}

\begin{definition} \rm 
	\cite{udriste} Let $E \subseteq (M,g)$ be star-shaped at $p_0 \in E$ and $f:E\rightarrow\mathbb{R}$ be a real-valued function. Then:
	\begin{itemize}
		\item [1)] $f$ is convex at $p_0$ if
		$$f(\gamma_{\2}(s)) \leq (1-s)f(p_0) +sf(p), ~~~ \forall~ p \in E,~~ \gamma_{\2}\in \Gamma_0,~~\forall~s\in(0,1),$$
		where $\Gamma_0$ is the collection of all geodesics emanating from $p_0$ and terminating at $p$.
		\item [2)] $f$ is strictly convex at $p_0$ if
		$$f(\gamma_{\2}(s)) < (1-s)f(p_0) +sf(p), ~~~ \forall~ p \in E, ~~p\neq p_0,~~ \gamma_{\2}\in \Gamma_0,~~\forall~s\in(0,1).$$
		\item [3)] $f$ is linear affine at $p_0$ if
		$$f(\gamma_{\2}(s)) = (1-s)f(p_0) +sf(p), ~~~ \forall~ p \in E, ~~ \gamma_{\2}\in \Gamma_0,~~\forall~s\in(0,1).$$
	\end{itemize}
\end{definition}

Next, we present the definition of convexity at a point of an IVF defined on a set which is star-shaped at that point. 
\begin{definition} \cite{hilal}
	\rm Let $E \subseteq (M,g)$ be star-shaped at $p_0 \in E$ and $f:E\rightarrow\mathbb{I}$ be an IVF. Then:
	\begin{itemize}
		\item [1)] $f$ is convex at $p_0$ if
		$$f(\gamma_{\2}(s)) \leq^{\text{min}} (1-s)f(p_0) +sf(p), ~~~ \forall~ p \in E,~~ \gamma_{\2}\in \Gamma_0,~~\forall~s\in(0,1),$$
		where $\Gamma_0$ is the collection of all geodesics emanating from $p_0$ and terminating at $p$.
		\item [2)] $f$ is strictly convex at $p_0$ if
		$$f(\gamma_{\2}(s)) <^{\text{min}} (1-s)f(p_0) +sf(p), ~~~ \forall~ p \in E, ~~p\neq p_0,~~ \gamma_{\2}\in \Gamma_0,~~\forall~s\in(0,1).$$
	\end{itemize}
\end{definition}

\begin{definition}
	\rm Let $E\subseteq (M,g)$ be star-shaped at $p_0\in E$, then an IVF $f$ is linear affine at $p_0$ if
	$$f(\gamma_{\2}(s)) = (1-s)f(p_0) +sf(p), ~~~ \forall~ p \in E, ~~ \gamma_{\2}\in \Gamma_0,~~\forall~s\in(0,1).$$
\end{definition}

\begin{definition} \label{definition**,p2}
	\rm	Let $E\subseteq (M,g)$ be star-shaped at $p_0 \in E$. We say an IVF $f:E\rightarrow \mathbb{I}$ with $f(p)=\langle f^c(p),~f^w(p)\rangle$ is cw-convex at $p_0$ if the center function $f^c$ and half-width $f^w$ are convex at $p_0$.
\end{definition}

The following lemmas are similar to the ones presented in the previous section. Here we only provide the statements as the proofs are respectively similar to their analogous ones.

\begin{lemma}
	Suppose $E\subseteq (M,g)$ is star-shaped at $p_0 \in E$ and let $f:E \rightarrow \mathbb{I}$ be an IVF with $f(p)=\langle f^c(p),f^w(p)\rangle$. If the center function $f^c$ is strictly convex at $p_0\in E$, then the IVF $f$ is convex at $p_0\in E$.
\end{lemma}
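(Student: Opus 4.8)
The plan is to mirror the proof of Lemma~\ref{lemma1*,p2} almost verbatim, localizing every inequality to the base point $p_0$ and to the geodesics in $\Gamma_0$. First I would unpack the hypothesis that the center function $f^c$ is strictly convex at $p_0$: by definition this means that for every $p \in E$ with $p \neq p_0$, every geodesic $\gamma_{\2} \in \Gamma_0$ joining $p_0$ to $p$, and every $s \in (0,1)$,
$$f^c(\gamma_{\2}(s)) < (1-s)f^c(p_0) + s f^c(p).$$

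Next I would feed this strict inequality into the order relation~(\ref{orderrelation,p2}). The crucial observation is that the centers already obey a \emph{strict} inequality, so we land in the first branch $t_1^c < t_2^c$ of~(\ref{orderrelation,p2}); consequently the half-widths are irrelevant and no assumption whatsoever on $f^w$ is needed. Concretely, writing both sides in center/half-width form, the strict inequality above yields
$$\langle f^c(\gamma_{\2}(s)),~f^w(\gamma_{\2}(s))\rangle \leq^{\text{min}} \langle (1-s)f^c(p_0)+sf^c(p),~(1-s)f^w(p_0)+sf^w(p)\rangle,$$
regardless of how $f^w(\gamma_{\2}(s))$ compares to $(1-s)f^w(p_0)+sf^w(p)$.

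Finally, since $s \in (0,1)$ gives $1-s \geq 0$ and $s \geq 0$, Lemma~\ref{lemma4,p2} identifies the right-hand interval as $(1-s)f(p_0)+sf(p)$ in its center/half-width representation, so that
$$f(\gamma_{\2}(s)) \leq^{\text{min}} (1-s)f(p_0) + s f(p), \quad \forall~p \in E,~\gamma_{\2}\in \Gamma_0,~\forall~s\in(0,1),$$
which is precisely the definition of $f$ being convex at $p_0$. There is no genuine obstacle here: the only point worth flagging is that strict convexity of the center alone forces the first branch of the order relation, so---unlike the cw-convex situation of Definition~\ref{definition**,p2}---convexity of $f^w$ at $p_0$ plays no role in the argument.
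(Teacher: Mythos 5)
Your proof is correct and takes exactly the approach the paper intends: the paper does not write out a proof for this lemma, stating only that it is analogous to Lemma~\ref{lemma1*,p2}, and your argument is precisely that proof localized at $p_0$ with geodesics in $\Gamma_0$. Your key observation---that strict inequality of the centers forces the first branch of the order relation (\ref{orderrelation,p2}), so no hypothesis on $f^w$ is needed before invoking Lemma~\ref{lemma4,p2}---is the same mechanism used in the paper's proof of the analogous result on totally convex sets.
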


\begin{lemma}
	Suppose $E\subseteq (M,g)$ is star-shaped at $p_0 \in E$ and let $f:E \rightarrow \mathbb{I}$ be an IVF with $f(p)=\langle f^c(p),f^w(p)\rangle$. If the center function $f^c$ is linear affine at $p_0$ and the half-width function $f^w$ are convex at $p_0$, then the IVF $f$ is convex at $p_0$.
\end{lemma}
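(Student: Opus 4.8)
The plan is to adapt the argument of Lemma \ref{lemma1*,p2}, replacing the strict convexity of the center by the two hypotheses at hand. First I would fix an arbitrary $p \in E$, a geodesic $\gamma_{\2} \in \Gamma_0$ emanating from $p_0$ and terminating at $p$, and a parameter $s \in (0,1)$; since $E$ is star-shaped at $p_0$, the point $\gamma_{\2}(s)$ lies in $E$, so $f(\gamma_{\2}(s))$ is defined. Using the hypotheses at $p_0$, I would record the linear-affine identity for the center,
$$f^c(\gamma_{\2}(s)) = (1-s)f^c(p_0) + s f^c(p),$$
together with the convexity inequality for the half-width,
$$f^w(\gamma_{\2}(s)) \leq (1-s)f^w(p_0) + s f^w(p).$$

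Next I would evaluate the right-hand side $(1-s)f(p_0) + s f(p)$ by means of Lemma \ref{lemma4,p2}; since $1-s \geq 0$ and $s \geq 0$ on $(0,1)$, this yields the interval
$$\langle (1-s)f^c(p_0) + s f^c(p),~ (1-s)f^w(p_0) + s f^w(p)\rangle.$$
Its center coincides exactly with $f^c(\gamma_{\2}(s))$ by the linear-affine identity, so comparing $f(\gamma_{\2}(s))$ with this interval via the order relation (\ref{orderrelation,p2}) places us in the branch where the two centers are equal. In that branch the comparison is decided by the half-widths, and the convexity inequality above is precisely the condition required there. Hence
$$f(\gamma_{\2}(s)) \leq^{\text{min}} (1-s)f(p_0) + s f(p),$$
and since $p$, $\gamma_{\2}$ and $s$ were arbitrary, $f$ is convex at $p_0$.

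The argument is essentially mechanical, so there is no real obstacle; the only point that deserves care is ensuring that the equality of centers forces us into the second case of the order relation (\ref{orderrelation,p2}) rather than the first. It is exactly there that the convexity of $f^w$ enters, and if the center were merely convex rather than affine one could no longer guarantee landing in this branch. Thus the role of the linear-affine hypothesis is to collapse the center comparison to an equality and hand the decision to the half-width, which is the structural heart of the statement.
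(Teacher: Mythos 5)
Your proof is correct and is essentially the paper's own argument: the paper omits the proof, stating only that it is analogous to its Section 3 counterpart, which in turn mirrors the proof of Lemma \ref{lemma1*,p2}, and that argument is exactly yours. Namely, linear affinity of $f^c$ forces equality of the centers, so the comparison under the order relation (\ref{orderrelation,p2}) (after computing the right-hand side via Lemma \ref{lemma4,p2}) is decided by the half-widths, where convexity of $f^w$ supplies precisely the required inequality.
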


\begin{lemma}
	Suppose $E\subseteq (M,g)$ is star-shaped at $p_0 \in E$. If the IVF $f:E \rightarrow \mathbb{I}$, with $f(p)=\langle f^c(p),f^w(p)\rangle$, is convex at $p_0$, then $f^c$ is also convex at $p_0$.
\end{lemma}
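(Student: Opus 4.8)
The plan is to mirror the already-proved ``on $E$'' version of this result, pushing the interval inequality that comes from convexity down onto its center coordinate. First I would invoke the hypothesis directly: since $f$ is convex at $p_0$, for every $p \in E$, every geodesic $\gamma_{\2} \in \Gamma_0$ from $p_0$ to $p$, and every $s \in (0,1)$ we have
$$f(\gamma_{\2}(s)) \leq^{\text{min}} (1-s)f(p_0) + sf(p).$$

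Next I would rewrite both sides in center--half-width form. The left side is $\langle f^c(\gamma_{\2}(s)),\ f^w(\gamma_{\2}(s))\rangle$ by definition of the IVF. For the right side, since $1-s \geq 0$ and $s \geq 0$, Lemma \ref{lemma4,p2} gives
$$(1-s)f(p_0) + sf(p) = \langle (1-s)f^c(p_0) + sf^c(p),\ (1-s)f^w(p_0) + sf^w(p)\rangle.$$
So the convexity inequality takes the form $\langle a, b\rangle \leq^{\text{min}} \langle c, d\rangle$ with $a = f^c(\gamma_{\2}(s))$ and $c = (1-s)f^c(p_0) + sf^c(p)$.

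The final step, and the only one that needs any care, is extracting the center inequality from the definition (\ref{orderrelation,p2}) of $\leq^{\text{min}}$. By that definition, $\langle a, b\rangle \leq^{\text{min}} \langle c, d\rangle$ holds precisely when either $a < c$ (the subcase $a \neq c$) or $a = c$ together with $b \leq d$ (the subcase $a = c$); in both subcases one has $a \leq c$, that is,
$$f^c(\gamma_{\2}(s)) \leq (1-s)f^c(p_0) + sf^c(p).$$
As $p \in E$, $\gamma_{\2} \in \Gamma_0$, and $s \in (0,1)$ were arbitrary, this is exactly the defining inequality for $f^c$ to be convex at $p_0$, which closes the argument. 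I expect no genuine obstacle: the entire content is the observation that the total order $\leq^{\text{min}}$ always dominates in the center coordinate, so convexity of the interval-valued map transmits automatically to convexity of its center function, while the half-width coordinate plays no role whatsoever.
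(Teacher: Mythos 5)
Your proof is correct and follows essentially the same route as the paper: the paper dismisses this lemma as analogous to its Section 3 counterpart, whose proof is stated to follow directly from the order relation (\ref{orderrelation,p2}) and Lemma \ref{lemma4,p2}, which is exactly the combination you spell out. Your write-up simply makes explicit the key observation (that $\leq^{\text{min}}$ always forces $t_1^c \leq t_2^c$ in both subcases) that the paper leaves implicit.
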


\begin{lemma}
	Suppose $E\subseteq (M,g)$ is star-shaped at $p_0 \in E$. If the IVF $f:E \rightarrow \mathbb{I}$, with $f(p) = \langle f^c(p),~f^w(p)\rangle$, is cw-convex at $p_0$, then $f$ is convex at $p_0$.
\end{lemma}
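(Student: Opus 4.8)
The plan is to mirror the proof of Lemma \ref{lemma3.4,p2}, transporting it from the two-point setting (arbitrary $p,q$ joined by $\gamma_{\1}$) to the star-shaped setting in which one endpoint is fixed at $p_0$ and the geodesic $\gamma_{\2}$ runs from $p_0$ to an arbitrary $p\in E$. Since $f$ is cw-convex at $p_0$, Definition \ref{definition**,p2} gives that both the center function $f^c$ and the half-width function $f^w$ are convex at $p_0$, so for every $p\in E$, every $\gamma_{\2}\in\Gamma_0$ and every $s\in(0,1)$ I have the two scalar inequalities
$$f^c(\gamma_{\2}(s)) \leq (1-s)f^c(p_0)+sf^c(p) \quad\text{and}\quad f^w(\gamma_{\2}(s)) \leq (1-s)f^w(p_0)+sf^w(p).$$

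First I would record what the target inequality $f(\gamma_{\2}(s))\leq^{\text{min}}(1-s)f(p_0)+sf(p)$ amounts to after applying Lemma \ref{lemma4,p2} to the right-hand side: by that lemma the right-hand interval has center $(1-s)f^c(p_0)+sf^c(p)$ and half-width $(1-s)f^w(p_0)+sf^w(p)$, so by the order relation (\ref{orderrelation,p2}) the claim reduces to showing that for each $s$ either the center inequality is strict, or it is an equality and the half-width inequality holds. Accordingly I would introduce the tie set $T=\{s\in(0,1): f^c(\gamma_{\2}(s))=(1-s)f^c(p_0)+sf^c(p)\}$ exactly as in Lemma \ref{lemma3.4,p2}.

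Then the argument splits into two complementary regions. For $s\in(0,1)\setminus T$ the center inequality above is strict, which by the first branch of (\ref{orderrelation,p2}) already yields $f(\gamma_{\2}(s))\leq^{\text{min}}(1-s)f(p_0)+sf(p)$ regardless of the half-widths. For $s\in T$ the centers agree, so the second branch of (\ref{orderrelation,p2}) applies and the convexity of $f^w$ supplies precisely the required half-width inequality, again giving the $\leq^{\text{min}}$ relation. Combining the two cases over all $s\in(0,1)$ establishes convexity of $f$ at $p_0$.

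I do not anticipate a genuine obstacle here, since the structure is identical to Lemma \ref{lemma3.4,p2}; the only point demanding care is the case $s\in T$, where the center tie forces the decision onto the half-width, so one must invoke the convexity of $f^w$ rather than that of $f^c$. The sole adaptation to the pointwise setting is notational, namely replacing the free pair $(p,q)$ and the interval $[0,1]$ by the fixed endpoint $p_0$, the variable endpoint $p$, and the open interval $(0,1)$ coming from the definition of convexity at a point.
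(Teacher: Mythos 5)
Your proposal is correct and takes essentially the same approach as the paper: the paper gives no separate written proof of this lemma, stating only that it is proved like its whole-domain analogue (Lemma \ref{lemma3.4,p2}), and your argument is exactly that proof transported to the star-shaped setting. In particular, your tie set $T$, the strict-center case handled by the first branch of (\ref{orderrelation,p2}), and the center-tie case resolved by convexity of $f^w$ reproduce the paper's reasoning faithfully.
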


\begin{lemma} \label{lemma4**,p2}
	Suppose $E\subseteq (M,g)$ is star-shaped at $p_0 \in E$ and let $f:E \rightarrow \mathbb{I}$ be an IVF with $f(p) = \langle f^c(p),~f^w(p)\rangle$. Then, $f$ is convex at $p_0$ if and only if for any $p \in E$, the function $f \circ \gamma_{\2}:[0,1]\rightarrow\mathbb{I}$ is convex at 0, where $\gamma_{\2}$ is the geodesic segment joining $p_0$ and $p$.
\end{lemma}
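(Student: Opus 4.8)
The plan is to unwind both notions of convexity-at-a-point into explicit $\leq^{\text{min}}$ inequalities and then match them term by term, the only geometric input being that an affine reparametrization of a geodesic is again a geodesic. Recall that $g:=f\circ\gamma_{p_0p}$ is a map $[0,1]\to\mathbb{I}$, that $[0,1]$ is star-shaped at $0$, and that the geodesic of $\mathbb{R}$ joining $0$ to $t\in[0,1]$ is the segment $\sigma\mapsto\sigma t$. Hence $g$ is convex at $0$ precisely when
$$g(\sigma t)\leq^{\text{min}}(1-\sigma)g(0)+\sigma g(t),\qquad\forall\,t\in[0,1],\ \forall\,\sigma\in(0,1).$$
Since $g(0)=f(p_0)$, $g(t)=f(\gamma_{p_0p}(t))$ and $g(\sigma t)=f(\gamma_{p_0p}(\sigma t))$, I would fix a single geodesic $\gamma_{p_0p}\in\Gamma_0$ and prove the equivalence for it; as both sides of the stated biconditional quantify over the same family of geodesics from $p_0$, arguing geodesic-by-geodesic suffices.

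For the implication from convexity of $g$ at $0$ to convexity of $f$ at $p_0$, I would simply specialize the displayed inequality at $t=1$. Then $g(t)=f(p)$, and writing $s$ for $\sigma$ gives
$$f(\gamma_{p_0p}(s))\leq^{\text{min}}(1-s)f(p_0)+sf(p),\qquad\forall\,s\in(0,1),$$
which is exactly the defining inequality of convexity of $f$ at $p_0$ along $\gamma_{p_0p}$. Since $p$ and the geodesic were arbitrary, $f$ is convex at $p_0$.

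For the reverse implication, assume $f$ is convex at $p_0$ and fix $p\in E$, the geodesic $\gamma_{p_0p}$, and $t\in[0,1]$. By star-shapedness of $E$ at $p_0$ the point $q:=\gamma_{p_0p}(t)$ lies in $E$, and the reparametrized restriction $\beta(\sigma):=\gamma_{p_0p}(\sigma t)$, $\sigma\in[0,1]$, is a geodesic from $p_0$ to $q$, so $\beta\in\Gamma_0$. Applying convexity of $f$ at $p_0$ to the point $q$ along $\beta$ yields
$$f(\beta(\sigma))\leq^{\text{min}}(1-\sigma)f(p_0)+\sigma f(q),\qquad\forall\,\sigma\in(0,1),$$
and substituting $\beta(\sigma)=\gamma_{p_0p}(\sigma t)$, $f(p_0)=g(0)$, $f(q)=g(t)$ recovers $g(\sigma t)\leq^{\text{min}}(1-\sigma)g(0)+\sigma g(t)$, i.e. convexity of $g$ at $0$.

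The only genuinely geometric step, and the one I expect to need the most care, is verifying that $\beta$ is a geodesic from $p_0$ to $q$ belonging to $\Gamma_0$, since the definition of convexity of $f$ at $p_0$ quantifies over all such geodesics; this is the same affine-reparametrization device used in the global analogue earlier in the paper. The remainder is bookkeeping: aligning the two quantifier ranges and keeping the parameter confined to the open interval $(0,1)$ on which the inequalities are asserted.
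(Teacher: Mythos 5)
Your proposal is correct and follows essentially the same route as the paper: the paper proves this lemma by declaring it analogous to its global counterpart (Lemma 3.5), whose proof likewise obtains one direction by specializing the parameter (your $t=1$) and the other by the affine-reparametrization device $\alpha(\sigma)=\gamma(s_1+\sigma(s_2-s_1))$, of which your $\beta(\sigma)=\gamma_{\2}(\sigma t)$ is exactly the point-based case $s_1=0$, $s_2=t$. Your explicit attention to $\beta$ being a geodesic in $\Gamma_0$ and to the quantification over geodesics is a careful touch, but the argument is the same one the paper intends.
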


The following example shows that the lower level sets of a real-valued function, which is convex at a single point, may not be a convex set in general.

\begin{example}\rm 
	The function $f(p)=-|p|,~p\in \mathbb{R}$ is convex at $p=0$ only. The lower level set, $D_a = \{p\in \mathbb{R}:f(p)\leq a\}, a\in \mathbb{R}$, at $a=-1$ is $D_{-1}=(-\infty,-1)\cup(1, \infty)$ which is not a convex set.
\end{example}

In view of above example, we have the following lemma.

\begin{lemma} \label{lemma4.6,p3}
	Let $E\subseteq (M,g)$ be star-shaped at $p_0 \in E$ and $f:E \rightarrow \mathbb{R}$ be convex at $p_0$, then the lower level set
	$$D_a = \{p\in \mathbb{E}:f(p)\leq a\}, ~~a \in \mathbb{R},$$
	is star-shaped at $p_0$ if $p_0 \in D_{a}$.
\end{lemma}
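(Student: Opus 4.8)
The plan is to verify the definition of star-shapedness directly, by taking an arbitrary point of $D_a$ and a geodesic joining it to $p_0$, and showing that every interior point of that geodesic also lies in $D_a$. The hypothesis $p_0 \in D_a$ (equivalently $f(p_0) \leq a$) will be exactly what closes the estimate.

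Concretely, I would fix an arbitrary $p \in D_a$ and $s \in (0,1)$, and let $\gamma_{\2}$ denote a geodesic joining $p_0$ and $p$. Since $D_a \subseteq E$ and $E$ is assumed star-shaped at $p_0$, the point $\gamma_{\2}(s)$ lies in $E$, so $f$ is defined there. This is the only place where the star-shapedness of the ambient set $E$ is used, and it is what guarantees the inequality from convexity at $p_0$ is even applicable.

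Next I would invoke the definition of $f$ being convex at $p_0$ to write
$$f(\gamma_{\2}(s)) \leq (1-s)f(p_0) + s f(p).$$
Because $p_0 \in D_a$ gives $f(p_0) \leq a$ and $p \in D_a$ gives $f(p) \leq a$, the right-hand side is bounded above by $(1-s)a + s a = a$, whence $f(\gamma_{\2}(s)) \leq a$, i.e. $\gamma_{\2}(s) \in D_a$. Since $p \in D_a$ and $s \in (0,1)$ were arbitrary, $D_a$ is star-shaped at $p_0$.

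I do not anticipate any real obstacle here; the argument is a one-line chain of inequalities once the domain issue is settled. The single point requiring care is emphasizing that $\gamma_{\2}(s) \in E$ follows from the star-shapedness of $E$ rather than of $D_a$ (which is what we are trying to establish), so that the convexity inequality is legitimate; without this observation the estimate would be vacuous because $f$ need not be defined off $E$.
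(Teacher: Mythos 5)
Your proposal is correct and follows essentially the same argument as the paper: apply the definition of convexity at $p_0$ along a geodesic $\gamma_{\2}$ joining $p_0$ to an arbitrary $p \in D_a$, then bound $(1-s)f(p_0)+sf(p) \leq (1-s)a+sa = a$. Your extra remark that $\gamma_{\2}(s) \in E$ (via star-shapedness of $E$) is needed for $f$ to be defined there is a point the paper leaves implicit, but it does not change the route.
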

\begin{proof}
	For any $p\in D_a$ with $p_0\in D_{a}$, we have 
	$$f(p)\leq a ~~~ \text{and} ~~~ f(p_0)\leq a.$$
	Let $\gamma_{\2}(s), s\in[0,1]$ be any geodesic joining $p_0$ with $p$. From convexity of $f$ at $p_0$, we have
	\begin{align*}
		f(\gamma_{\2}(s)) &\leq (1-s)f(p_0)+sf(p),\\
		&\leq (1-s)a +s(a),\\
		&=a.
	\end{align*}
	which shows $D_a$ is star-shaped at $p_0$.
	\end{proof}

For the case of convex IVF at a point, the above Lemma is stated as follows:

\begin{lemma}\label{lemma4.7,p3}
	Let $E\subseteq (M,g)$ be star-shaped at $p_0 \in E$ and the IVF $f:E \rightarrow \mathbb{I}$ be convex at $p_0$, then the lower level set
	$$D= \{p\in \mathbb{E}:f(p)\leq^{\text{min}} B\}, ~~B \in \mathbb{I},$$
	is star-shaped at $p_0$ if $p_0 \in D.$
\end{lemma}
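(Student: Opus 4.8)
The plan is to mirror the argument used for the real-valued case in Lemma \ref{lemma4.6,p3}, replacing the scalar inequality $\leq$ throughout by the total order $\leq^{\text{min}}$ and invoking the interval-arithmetic lemmas wherever scalars were previously manipulated directly. First I would fix an arbitrary $p \in D$ and use the hypothesis $p_0 \in D$, so that by the definition of the lower level set both $f(p) \leq^{\text{min}} B$ and $f(p_0) \leq^{\text{min}} B$ hold. Since $E$ is star-shaped at $p_0$, any geodesic $\gamma_{\2}(s)$, $s \in (0,1)$, joining $p_0$ to $p$ remains in $E$, so $f(\gamma_{\2}(s))$ is defined, and the goal is precisely to show that $\gamma_{\2}(s) \in D$, i.e. $f(\gamma_{\2}(s)) \leq^{\text{min}} B$.

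Next I would invoke convexity of the IVF $f$ at $p_0$ to obtain
$$f(\gamma_{\2}(s)) \leq^{\text{min}} (1-s)f(p_0) + sf(p), \qquad \forall~ s \in (0,1).$$
From here the right-hand side must be dominated by $B$. Since $1-s \geq 0$ and $s \geq 0$, part \textbf{(i)} of Lemma \ref{lemma2,p2} applied to $f(p_0) \leq^{\text{min}} B$ and to $f(p) \leq^{\text{min}} B$ gives $(1-s)f(p_0) \leq^{\text{min}} (1-s)B$ and $sf(p) \leq^{\text{min}} sB$; adding these via part \textbf{(ii)} of the same lemma yields
$$(1-s)f(p_0) + sf(p) \leq^{\text{min}} (1-s)B + sB.$$

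The one small computation to record is that $(1-s)B + sB = B$: writing $B = \langle b^c, b^w \rangle$ and applying Lemma \ref{lemma4,p2} with the nonnegative coefficients $1-s$ and $s$ gives $(1-s)B + sB = \langle (1-s)b^c + s b^c,~ (1-s)b^w + s b^w \rangle = \langle b^c, b^w \rangle = B$. Combining the two displayed inequalities by transitivity of $\leq^{\text{min}}$ then gives $f(\gamma_{\2}(s)) \leq^{\text{min}} B$, hence $\gamma_{\2}(s) \in D$, which is exactly the star-shapedness of $D$ at $p_0$. There is no genuine obstacle here; the only points needing care are that the coefficients $1-s$ and $s$ are nonnegative (so that Lemma \ref{lemma2,p2}\,\textbf{(i)} and Lemma \ref{lemma4,p2} apply with no sign flip of the half-widths) and that $\leq^{\text{min}}$ is transitive, which follows directly from its lexicographic definition in (\ref{orderrelation,p2}).
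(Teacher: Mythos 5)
Your proof is correct and follows the same route the paper intends: the paper's proof of this lemma simply says it is analogous to Lemma \ref{lemma4.6,p3}, and your argument is precisely that adaptation, using convexity of $f$ at $p_0$ along the geodesic $\gamma_{\2}$, parts \textbf{(i)} and \textbf{(ii)} of Lemma \ref{lemma2,p2} to dominate $(1-s)f(p_0)+sf(p)$ by $(1-s)B+sB$, and Lemma \ref{lemma4,p2} to collapse $(1-s)B+sB$ to $B$ (the same steps the paper carries out explicitly in its level-set lemma for globally convex IVFs). Your extra care in noting the nonnegativity of $s$ and $1-s$ and the transitivity of $\leq^{\text{min}}$ only makes explicit what the paper leaves implicit.
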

\begin{proof}
	The proof is analogous to the Lemma \ref{lemma4.6,p3}.
	\end{proof}

\section{Optimality conditions for an unconstrained optimization programming problem}

In this section, we present some optimality conditions for an unconstrained IVOP problem.

\begin{definition}\cite{hilal} \label{definition5.1,p2} \rm 
	Let $E$ be a subset of $(M,g)$ and $p\in E$. Let $X_p \in T_p(E)$ and $\gamma(s);~s\in I, ~0\in I~ \&~ \gamma(I)\subseteq E$, be a geodesic for which $\gamma(0)=p, ~ \&~ \dot{\gamma}(0)=X_p$. We say a real-valued function $f:E \rightarrow \mathbb{R}$ is directionally differentiable at $p$ in the direction $X_p$, if the limit
	$$Df(p;~X_p)=\lim\limits_{s\rightarrow0^+} \frac{f(\gamma(s))-f(p)}{s}$$
	exists, where $Df(p;X_p)$ is said to be directional derivative of $f$ at $p$ in the direction $X_p$. Moreover, we say $f$ is directionally differentiable at $p$, if $Df(p;X_p)$ exists at $x$ in every direction $X_p \in T_p(E)$. Furthermore, if $Df(p;X_p)$ exists at each $p\in E$ and in every direction $X_p \in T_p(E)$, we say $f$ is directionally differentiable on $E$.
\end{definition}

\begin{theorem} \cite{hilal} \label{theorem30,p2}
	Let $E \subseteq (M,g)$ be star-shaped at $p_0 \in E$ and the function $f:E\rightarrow \mathbb{R}$ be directionally differentiable at $p_0$,
	\begin{enumerate} 
		\item [(i)] if $f$ is convex at $p_0$, then 
		\begin{equation}
			f(p)-f(p_0)\geq Df(p_0;~X_{p_0}); ~~\forall~p \in E,~~ \forall~\gamma_{\2}\in \Gamma_0,
		\end{equation} \label{eq9,p2}
		where $\Gamma_0$ is the set of geodesics joining $p_0$ and $p$ such that $\gamma_{\2}(0)=p$ and $\dot{\gamma}_{\2}(0)=X_{p_0}$,
		\item [(ii)] if $f$ is strictly convex on $E$ then 
		$$f(p)-f(p_0)> Df(p_0;~X_{p_0}); ~~\forall~p \in E,~~ p\neq p_0~~ \forall~\gamma_{\2}\in \Gamma_0.$$
	\end{enumerate}
\end{theorem}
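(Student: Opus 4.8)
The plan is to extract both estimates from the convexity inequality at $p_0$ by isolating the difference quotient appearing in Definition~\ref{definition5.1,p2} and then letting $s\to 0^+$. Throughout, I fix $p\in E$ and a geodesic $\gamma_{\2}\in\Gamma_0$ with $\gamma_{\2}(0)=p_0$, $\gamma_{\2}(1)=p$ and $\dot{\gamma}_{\2}(0)=X_{p_0}$.

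For (i): convexity of $f$ at $p_0$ gives $f(\gamma_{\2}(s))\le (1-s)f(p_0)+sf(p)$ for all $s\in(0,1)$. Subtracting $f(p_0)$ and dividing by $s>0$ yields
$$\frac{f(\gamma_{\2}(s))-f(p_0)}{s}\ \le\ f(p)-f(p_0).$$
Since $f$ is directionally differentiable at $p_0$, the left-hand side converges to $Df(p_0;X_{p_0})$ as $s\to 0^+$, and the weak inequality is preserved in the limit. This is precisely (i).

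The delicate point is (ii): if one merely repeats the computation with the strict convexity inequality, one obtains a strict bound on each difference quotient, but strict inequalities generally collapse to weak ones under a limit, so this recovers nothing stronger than (i). To force a genuine strict inequality I would separate the strict gap from the limiting process by freezing a fixed interior point. Choose any $s_0\in(0,1)$ and set $q=\gamma_{\2}(s_0)$, which lies in $E$ by star-shapedness. The reparametrised geodesic $\alpha(s)=\gamma_{\2}(s_0 s)=\exp_{p_0}(s\,s_0 X_{p_0})$ joins $p_0$ to $q$ with $\dot{\alpha}(0)=s_0 X_{p_0}$. Applying part (i) along $\alpha$, together with the positive homogeneity $Df(p_0;s_0 X_{p_0})=s_0\,Df(p_0;X_{p_0})$ --- which follows from $\exp_{p_0}(s\,s_0 X_{p_0})=\gamma_{X_{p_0}}(s_0 s)$ and a change of variable in the defining limit --- gives
$$f(q)-f(p_0)\ \ge\ s_0\,Df(p_0;X_{p_0}).$$
On the other hand, strict convexity evaluated at the fixed $s_0$ gives $f(q)-f(p_0)<s_0\big(f(p)-f(p_0)\big)$. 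Combining the two displays and cancelling the factor $s_0>0$ produces $Df(p_0;X_{p_0})<f(p)-f(p_0)$, which is (ii).

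I expect the only genuine obstacle to be this loss of strictness in (ii); the interior-point device, supported by the homogeneity identity for $Df(p_0;\cdot)$, is exactly what repairs it, and that homogeneity identity is the one auxiliary fact I would state and verify explicitly.
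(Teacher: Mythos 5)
You should first note that the paper never proves Theorem \ref{theorem30,p2} at all: it is imported from reference \cite{hilal} (stated with a citation and no proof), so there is no in-paper argument to compare yours against; your proof has to stand on its own, and it does. Part (i) is the standard difference-quotient argument from Definition \ref{definition5.1,p2}, and the limit step is handled correctly. For part (ii) you correctly diagnose the only real subtlety, namely that passing to the limit $s\to 0^+$ in a strict inequality yields only a weak one, and your repair is sound: freeze $s_0\in(0,1)$, set $q=\gamma_{\2}(s_0)\in E$ (star-shapedness), apply the part-(i) estimate along the affinely reparametrised geodesic $\alpha(s)=\gamma_{\2}(s_0 s)$, use the positive homogeneity $Df(p_0;s_0X_{p_0})=s_0\,Df(p_0;X_{p_0})$ (which your change-of-variables argument via $exp_{\x}(sX_p)=\gamma_{\3}(s)$ establishes correctly), and then invoke strict convexity at the fixed parameter $s_0$ to get
\begin{equation*}
s_0\,Df(p_0;X_{p_0})\ \leq\ f(q)-f(p_0)\ <\ s_0\bigl(f(p)-f(p_0)\bigr),
\end{equation*}
and cancel $s_0>0$. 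One small point worth making explicit: applying part (i) along $\alpha$ requires the convexity inequality for the pair $(p_0,q)$, and strict convexity only supplies it when $q\neq p_0$; so you should choose $s_0$ with $\gamma_{\2}(s_0)\neq p_0$ (always possible, since a nonconstant geodesic meets $p_0$ at only finitely many parameters in $[0,1]$, and the case $p=p_0$ is excluded by the hypothesis of (ii)). This is a cosmetic fix, not a gap.
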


As an immediate outcome of Theorem \ref{theorem30,p2}, the following corollary provides both a necessary and sufficient condition for a point to be a local minimum.

\begin{corollary}
	Suppose $E \subseteq (M,g)$ is star-shaped at $p_0 \in E$ and the function $f:E\rightarrow \mathbb{R}$ be directionally differentiable and convex at $p_0$. Then $p_0$ is a local minimum point of the real-valued function $f$ if and only if $Df(p_0;X_{p_0}) \geq 0,~ \forall~X_{p_0} \in T_{p_0}(E)$
\end{corollary}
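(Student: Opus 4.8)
The statement is a biconditional, so the plan is to establish the two implications separately, and I expect both to be short given the machinery already in place.

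For sufficiency (the direction $Df(p_0;X_{p_0})\ge 0 \Rightarrow p_0$ a local minimum), I would invoke Theorem \ref{theorem30,p2} directly. Since $f$ is directionally differentiable and convex at $p_0$, part (i) of that theorem yields
$$f(p) - f(p_0) \geq Df(p_0; X_{p_0}), \quad \forall~p \in E,~\forall~\gamma_{\2} \in \Gamma_0.$$
Assuming $Df(p_0; X_{p_0}) \geq 0$ for every direction $X_{p_0} \in T_{p_0}(E)$, the right-hand side is nonnegative, so $f(p) \geq f(p_0)$ for all $p \in E$. This in fact exhibits $p_0$ as a global minimum, which a fortiori makes it a local minimum.

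For necessity ($p_0$ a local minimum $\Rightarrow Df(p_0;X_{p_0})\ge 0$ for all directions), I would argue directly from the definitions, and note that convexity is not even needed here. By hypothesis there is $\delta > 0$ with $f(p_0) \leq f(p)$ for all $p \in B(p_0,\delta) \cap E$. Fixing an arbitrary $X_{p_0} \in T_{p_0}(E)$, I would take the geodesic $\gamma_{\2}$ of Definition \ref{definition5.1,p2} with $\gamma_{\2}(0)=p_0$, $\dot{\gamma}_{\2}(0)=X_{p_0}$, and $\gamma_{\2}(I)\subseteq E$. By continuity of $\gamma_{\2}$ there is $s_0>0$ so that $\gamma_{\2}(s)\in B(p_0,\delta)\cap E$ for $s\in(0,s_0)$, whence $f(\gamma_{\2}(s))-f(p_0)\ge 0$ and therefore
$$\frac{f(\gamma_{\2}(s)) - f(p_0)}{s} \geq 0, \quad s \in (0, s_0).$$
Letting $s \to 0^+$ and using directional differentiability gives $Df(p_0; X_{p_0}) \geq 0$; since $X_{p_0}$ was arbitrary, this holds in every direction.

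The only delicate point — and it is minor — lies in the necessity part: I must be sure the geodesic carrying the chosen direction stays inside $E \cap B(p_0,\delta)$ for small positive $s$, so that the local-minimum inequality genuinely applies to $f(\gamma_{\2}(s))$. This is handled by star-shapedness of $E$ at $p_0$ together with the requirement $\gamma_{\2}(I)\subseteq E$ already built into Definition \ref{definition5.1,p2}, with continuity supplying the open-ball condition on a short initial segment; the limit then preserves the weak inequality $\geq$. Thus the corollary reduces almost entirely to bookkeeping around Theorem \ref{theorem30,p2} and the limit definition of the directional derivative.
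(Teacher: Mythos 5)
Your proof is correct and takes essentially the same route as the paper, whose entire proof is the single line that the result ``follows directly from Theorem \ref{theorem30,p2}.'' Your write-up simply makes explicit what that one-liner leaves implicit: the sufficiency direction is an immediate application of Theorem \ref{theorem30,p2}(i), while the necessity direction is the standard difference-quotient argument from the definition of the directional derivative (where, as you rightly observe, convexity plays no role).
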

\begin{proof}
	The proof follows directly from Theorem \ref{theorem30,p2}.
	\end{proof}

\begin{theorem}
	Suppose that $E\subseteq (M,g)$ is star-shaped at $p_0\in E$ and an IVF $f:E \rightarrow \mathbb{I}$ with $f(p)=\langle f^c(p), f^w(p) \rangle$ is convex at $p_0$. If $p_0$ is a local minimum point of $f$, then $p_0$ is also a global minimum point of $f$.
\end{theorem}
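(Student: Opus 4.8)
The plan is to argue by contradiction, exploiting the completeness of the order relation (\ref{orderrelation,p2}) together with the convexity of $f$ at $p_0$. Assume $p_0$ is a local minimum of $f$ but, for contradiction, is \emph{not} a global minimum. Failure of global minimality means that $f(p_0)\leq^{\text{min}}f(p)$ does not hold for every $p\in E$; since $\leq^{\text{min}}$ is a total order, this produces a point $q\in E$ with $f(q)<^{\text{min}}f(p_0)$. Because $E$ is star-shaped at $p_0$, the geodesic $\gamma_{p_0q}$ joining $p_0$ to $q$ satisfies $\gamma_{p_0q}(s)\in E$ for every $s\in(0,1)$, so the convexity hypothesis at $p_0$ can be applied along it.

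The crux is to combine convexity at $p_0$ with the strict inequality $f(q)<^{\text{min}}f(p_0)$ to manufacture points arbitrarily close to $p_0$ with strictly smaller value. Convexity at $p_0$ gives
$$f(\gamma_{p_0q}(s)) \leq^{\text{min}} (1-s)f(p_0)+sf(q), \qquad s\in(0,1),$$
and I would then show that the right-hand side is itself $<^{\text{min}}f(p_0)$. Writing $f(p_0)=\langle c_0,w_0\rangle$ and $f(q)=\langle c_q,w_q\rangle$ and invoking Lemma \ref{lemma4,p2}, the convex combination equals $\langle (1-s)c_0+sc_q,\ (1-s)w_0+sw_q\rangle$. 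The main obstacle, and the only place where the total (rather than merely partial) nature of the order relation (\ref{orderrelation,p2}) is genuinely used, is the case split forced by $f(q)<^{\text{min}}f(p_0)$: either $c_q<c_0$, whence the center $(1-s)c_0+sc_q=c_0-s(c_0-c_q)$ is strictly below $c_0$ for $s\in(0,1)$; or $c_q=c_0$ and $w_q<w_0$, whence the centers agree while the half-width $(1-s)w_0+sw_q=w_0-s(w_0-w_q)$ is strictly below $w_0$. In either case the order relation (\ref{orderrelation,p2}) yields $(1-s)f(p_0)+sf(q)<^{\text{min}}f(p_0)$, and by transitivity of $\leq^{\text{min}}$ I conclude $f(\gamma_{p_0q}(s))<^{\text{min}}f(p_0)$ for all $s\in(0,1)$.

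Finally I would let $s\to0^+$. Since $\gamma_{p_0q}$ is continuous with $\gamma_{p_0q}(0)=p_0$, every neighborhood of $p_0$ contains $\gamma_{p_0q}(s)$ for all sufficiently small $s$; taking the radius $\delta$ furnished by local minimality, there is an $s\in(0,1)$ with $\gamma_{p_0q}(s)\in B(p_0,\delta)\cap E$. Local minimality then forces $f(p_0)\leq^{\text{min}}f(\gamma_{p_0q}(s))$, which contradicts the strict inequality just established. Hence no such $q$ exists and $p_0$ is a global minimum. I expect the second-paragraph case analysis to be the decisive step, while the concluding limit argument is a routine appeal to continuity of the geodesic and to the definition of a local minimum.
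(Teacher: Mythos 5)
Your proposal is correct and follows essentially the same route as the paper's own proof: suppose some $q\in E$ satisfies $f(q)<^{\text{min}}f(p_0)$, apply convexity at $p_0$ along the geodesic $\gamma_{p_0q}$ to obtain $f(\gamma_{p_0q}(s))<^{\text{min}}f(p_0)$ for $s\in(0,1)$, and contradict local minimality by taking $s$ small enough that $\gamma_{p_0q}(s)$ lies in the ball where $p_0$ is minimal. The only difference is presentational: you verify the key strict inequality $(1-s)f(p_0)+sf(q)<^{\text{min}}f(p_0)$ by an explicit center/half-width case analysis via Lemma \ref{lemma4,p2}, whereas the paper cites parts \textbf{(i)} and \textbf{(ii)} of Lemma \ref{lemma2,p2} together with transitivity of the order relation.
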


\begin{proof}
	Since $p_0$ is a local minimum point, $\exists~\epsilon>0$ such that, $f(p_0)\leq f(p); ~ \forall~ p \in B(p_0;\epsilon)\cap E$, where $B(p_0;\epsilon)$ is an open $\epsilon$-ball about $p_0$. Suppose  $\exists$ $q\in E$ such that $f(q)<f(p_0)$. We consider the geodesic $\gamma_{\6}:[0,1]\rightarrow E$ with $\gamma_{\6}(0)=p_0$ and $\gamma_{\6}(1)=q$. From convexity of $f$ at $p_0$, we have 
	$$f(\gamma_{\6}(s))\leq^{\text{min}} (1-s)f(p_0)+sf(q),$$
	which from parts {\bf (i)} and {\bf(ii)} of Lemma \ref{lemma2,p2} and transitivity of order relation (\ref{orderrelation,p2}), yields
	$$f(\gamma_{\6}(s))<^{\text{min}}f(p_0).$$
	But, $\gamma_{\6}(s) \in B(p_0;\epsilon)\cap E$, for some $s\in (0,1)$ which gives a contradiction and hence, we conclude
	$$f(p_0)\leq^{\text{min}} f(q), ~ \forall~ q\in E.$$
	\end{proof}

\begin{remark}
	\rm The minimum value of an IVF, convex at that point where the minimum is attained, remains the same, provided it exists.
\end{remark}

\begin{theorem} \label{theorem5.3,p3}
	Suppose that $E\subseteq (M,g)$ is star-shaped at $p_0\in E$ and an IVF $f:E \rightarrow \mathbb{I}$ is convex at $p_0$. If $p_0$ is local minimum point of $f$, then the collection $K$ of minimum points of $f$ is star-shaped at $p_0$.
\end{theorem}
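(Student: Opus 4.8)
The plan is to recognize the set $K$ of minimum points as a lower level set of $f$ and then invoke Lemma \ref{lemma4.7,p3}. First I would apply the preceding theorem: since $f$ is convex at $p_0$ and $p_0$ is a local minimum point, $p_0$ is in fact a global minimum point, so $f(p_0) \leq^{\text{min}} f(p)$ for every $p \in E$. In particular $p_0 \in K$, and the minimum value of $f$ on $E$ is precisely the interval $f(p_0)$.

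Next I would show that $K$ coincides with the lower level set $D = \{p \in E : f(p) \leq^{\text{min}} f(p_0)\}$. Any $q \in K$ attains the global minimum value, hence $f(q) \leq^{\text{min}} f(p_0)$, giving $q \in D$. Conversely, if $q \in D$ then $f(q) \leq^{\text{min}} f(p_0)$, while global minimality gives $f(p_0) \leq^{\text{min}} f(q)$; since the order relation (\ref{orderrelation,p2}) is a total, and hence antisymmetric, order, these two inequalities force $f(q) = f(p_0)$, so $q$ is a minimum point and $q \in K$. Thus $K = D$ with $B = f(p_0)$, and $p_0 \in D$.

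Having written $K$ as the lower level set $D$ with $p_0 \in D$, the conclusion is immediate from Lemma \ref{lemma4.7,p3}, which states that exactly such a level set of a function convex at $p_0$ is star-shaped at $p_0$. To keep the argument self-contained one can instead reason directly: for any $p \in K$ and any geodesic $\gamma_{\2}$ joining $p_0$ and $p$, convexity at $p_0$ together with Lemma \ref{lemma4,p2} gives $f(\gamma_{\2}(s)) \leq^{\text{min}} (1-s)f(p_0) + s f(p) = f(p_0)$, where the last equality uses $f(p) = f(p_0)$; global minimality then yields $f(p_0) \leq^{\text{min}} f(\gamma_{\2}(s))$, and antisymmetry forces $f(\gamma_{\2}(s)) = f(p_0)$, so $\gamma_{\2}(s) \in K$. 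This is the definition of $K$ being star-shaped at $p_0$.

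The only real obstacle, and it is a mild one, is the identification $K = D$: it rests on the uniqueness of the minimum value noted in the Remark, equivalently on the antisymmetry of the total order (\ref{orderrelation,p2}), which is what converts the two one-sided inequalities into the interval equality $f(q) = f(p_0)$. Once this identification is secured, Theorem \ref{theorem5.3,p3} is simply the specialization of Lemma \ref{lemma4.7,p3} to $B = f(p_0)$, using parts \textbf{(i)} and \textbf{(ii)} of Lemma \ref{lemma2,p2} wherever interval arithmetic along the geodesic is needed.
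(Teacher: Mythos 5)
Your proposal is correct and follows essentially the same route as the paper: both identify the set $K$ of minimum points with the lower level set $D=\{p\in E: f(p)\leq^{\text{min}} B\}$ at the minimum value $B=f(p_0)$ and then invoke Lemma \ref{lemma4.7,p3}. In fact you are more careful than the paper, which simply asserts $K=E\cap D$ without justifying the inclusion $D\subseteq K$ that your antisymmetry/transitivity argument supplies, and which appeals additionally to the (here superfluous) fact that an intersection of sets star-shaped at a common point is star-shaped there.
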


\begin{proof}
	Suppose that $B\in \mathbb{I}$ is the minimum value of $f$, then we can express $K$ as $K=E\cap D$, where $D=\{p\in E: f(p)\leq^{\text{min}}B\}$ is star-shaped at $p_0$ by Lemma \ref{lemma4.7,p3}. One can easily show that intersection of two star-shaped sets, which are star-shaped at a common point say $p_0$, is also star-shaped at $p_0$. Hence, we conclude that K is also star-shaped at $p_0$.
	\end{proof}

\begin{corollary}
	Suppose that $E\subseteq (M,g)$ is star-shaped at $p_0\in E$ and an IVF $f:E \rightarrow \mathbb{I}$ is convex at $p_0$. Let $p_0$ be a local minimum point of $f$ and $K$ be the collection of minimum points of $f$. If $K$ contains any point other than $p_0$, then $K$ is an infinite set and $f$ can not be strictly convex at $p_0$.
\end{corollary}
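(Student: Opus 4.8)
The plan is to establish the two conclusions in turn, both resting on the star-shapedness of the minimum set $K$ at $p_0$ furnished by Theorem~\ref{theorem5.3,p3}. Fix, by hypothesis, a point $q\in K$ with $q\neq p_0$, and let $\gamma_{\6}:[0,1]\to E$ be a geodesic with $\gamma_{\6}(0)=p_0$ and $\gamma_{\6}(1)=q$.

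For the first assertion I would show that the whole geodesic segment lies in $K$. Since Theorem~\ref{theorem5.3,p3} gives that $K$ is star-shaped at $p_0$, we have $\gamma_{\6}(s)\in K$ for every $s\in(0,1)$, while the endpoints $\gamma_{\6}(0)=p_0$ and $\gamma_{\6}(1)=q$ already lie in $K$. As $q\neq p_0$, the map $s\mapsto\gamma_{\6}(s)$ is a non-constant continuous curve, so its image contains infinitely many distinct points (for instance $s\mapsto d(p_0,\gamma_{\6}(s))$ is continuous and ranges over all of $[0,d(p_0,q)]$). Hence $K$ is infinite.

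For the second assertion I would argue by contradiction. Since $p_0$ is a local minimum of the convex IVF $f$, it is in fact a global minimum by the global-minimum theorem proved above, so its value $B:=f(p_0)$ satisfies $B\leq^{\text{min}}f(p)$ for all $p\in E$. Because $\leq^{\text{min}}$ is a total, hence antisymmetric, order, every member of $K$ carries this \emph{exact} interval value; in particular $f(q)=B$. Now suppose $f$ were strictly convex at $p_0$. Applying the strict-convexity inequality along $\gamma_{\6}$ with $p=q$ gives, for each $s\in(0,1)$,
$$f(\gamma_{\6}(s)) <^{\text{min}} (1-s)f(p_0)+sf(q) = (1-s)B+sB = B,$$
where the last equality uses Lemma~\ref{lemma4,p2} together with $1-s,\,s\ge 0$. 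Since $\gamma_{\6}(s)\in E$ by star-shapedness of $E$, this contradicts $B\leq^{\text{min}}f(\gamma_{\6}(s))$. Therefore $f$ cannot be strictly convex at $p_0$.

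The argument is short and I anticipate no serious obstacle; the two points needing care are both routine. First, that the geodesic segment genuinely carries infinitely many distinct points, which is immediate from its being a non-degenerate continuous curve. Second, and more essential, that all members of $K$ share one and the same interval value $B$ rather than merely $\leq^{\text{min}}$-equivalent values; this is precisely where antisymmetry of the total order~(\ref{orderrelation,p2}) (equivalently, the uniqueness of the minimum value noted in the preceding remark) is invoked, and it is what lets the convex combination $(1-s)B+sB$ collapse to $B$ via Lemma~\ref{lemma4,p2}, producing the contradiction.
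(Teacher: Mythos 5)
Your proposal is correct and follows essentially the same route as the paper: both rest on Theorem~\ref{theorem5.3,p3} (star-shapedness of $K$ at $p_0$) to get infinitude of $K$, and both contradict strict convexity using the fact that the values along the geodesic from $p_0$ to $q$ cannot drop below the common minimum value. Your write-up merely makes explicit two points the paper leaves implicit — that antisymmetry of the total order forces all minimum points to share the exact value $B$, and that the strict-convexity inequality then collides with global minimality of $B$ — so it is a more detailed rendering of the same argument.
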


\begin{proof}
	By Theorem \ref{theorem5.3,p3}, if  $p \in K$ be a point other than $p_0$ then every point on the geodesic $\gamma_{\2}(s),~s\in[0,1]$, joining $p_0$ and $p$ is also a minimum point of $f$ and hence, $K$ can't be finite. Also, $f(\gamma_{\2}(s))=f(p)=f(p_0), ~\forall~s\in[0,1].$ So, $f$ can not be strictly convex.
	\end{proof}

\begin{definition} \cite{hilal} \rm 
	Let $E$ be a subset of $(M,g)$ and $p\in E$. Let $X_p \in T_p(E)$ and $\gamma(s);~s\in I, ~0\in I~ \&~ \gamma(I)\subseteq E$, be a geodesic for which $\gamma(0)=p, ~ \&~ \dot{\gamma}(0)=X_p$. We say an IVF $f:E \rightarrow \mathbb{I}$ with $f(p)= \langle f^c(p),~f^w(p)\rangle$ is gH-directionally differentiable at $p$ in the direction $X_p$, if the limit
	$$Df(p;~X_p)=\lim\limits_{s\rightarrow0^+} \frac{f(\gamma(s))\ominus_{gH}f(p)}{s}$$
	exists, where $Df(p;X_p)$ is said to be gH-directional derivative of $f$ at $p$ in the direction $X_p$. Moreover, we say $f$ is gH-directionally differentiable at $p$, if $Df(p;X_p)$ exists at $p$ in every direction $X_p \in T_p(E)$. Furthermore, if $Df(p;X_p)$ exists at each $p\in E$ and in every direction $X_p \in T_p(E)$, we say $f$ is gH-directionally differentiable on $E$.
\end{definition}

The following lemma gives the equivalence of gH-directional differentiability of an IVF in terms of its center function and half-width function.

\begin{lemma} \cite{hilal} \label{lemma22,p2}
	Let $f:E\rightarrow \mathbb{I}$ with $f(p)= \langle f^c(p),~f^w(p)\rangle$ be an IVF defined on $E\subseteq (M,g)$. Let $p\in E$ and $\gamma(s);~s\in I, ~0\in I~ \text{and}~ \gamma(I)\subseteq E$, be any geodesic for which $\gamma(0)=p, ~ \&~ \dot{\gamma}(0)=X_p\in T_{p}(E)$ such that $(f^w \circ \gamma)(s)$ is non-decreasing for $s \in I\cap[0,\infty)$. Then, gH-directional derivative of $f$ exists at $p$ in the direction $X_p$ if and only if the directional derivative of $f^c$ and $f^w$ exists at $p$ in the direction $X_p$. Hence,
	$$Df(p;X_p)~ = ~\langle Df^c(p;X_p),~Df^w(p;X_p)\rangle.$$
	where $Df^w(p;X_p)\geq 0$.
\end{lemma}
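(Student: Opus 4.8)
The plan is to reduce everything to the center/half-width representation and then exploit the fact that convergence in $\mathbb{I}$ under the Hausdorff metric is equivalent to componentwise convergence of the center and the half-width. First I would apply Lemma \ref{lemma13,p2} to rewrite the gH-difference occurring in the difference quotient as
$$f(\gamma(s))\ominus_{gH} f(p) = \langle f^c(\gamma(s)) - f^c(p),~ |f^w(\gamma(s)) - f^w(p)|\rangle,$$
and then, since $s\to 0^+$ means $s>0$, divide by $s$ via Lemma \ref{lemma4,p2} (scalar multiplication by $1/s>0$ leaves the half-width non-negative) to obtain
$$\frac{f(\gamma(s))\ominus_{gH} f(p)}{s} = \left\langle \frac{f^c(\gamma(s)) - f^c(p)}{s},~ \frac{|f^w(\gamma(s)) - f^w(p)|}{s}\right\rangle.$$

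Next I would record the elementary metric fact that, for the representation (\ref{eq1,p2}), the Hausdorff distance (\ref{metric,p2}) satisfies $d_H(\langle a_1,b_1\rangle,\langle a_2,b_2\rangle)=|a_1-a_2|+|b_1-b_2|$, so that a family of intervals converges in $\mathbb{I}$ if and only if its centers and half-widths converge separately. Consequently the limit defining $Df(p;X_p)$ exists if and only if both scalar limits $\lim_{s\to0^+}\frac{f^c(\gamma(s))-f^c(p)}{s}$ and $\lim_{s\to0^+}\frac{|f^w(\gamma(s))-f^w(p)|}{s}$ exist; the first is by definition $Df^c(p;X_p)$.

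The crux is to handle the absolute value in the half-width component, and this is exactly where the hypothesis that $(f^w\circ\gamma)(s)$ is non-decreasing on $I\cap[0,\infty)$ enters. For $s>0$ this monotonicity forces $f^w(\gamma(s))\geq f^w(\gamma(0))=f^w(p)$, so $|f^w(\gamma(s))-f^w(p)|=f^w(\gamma(s))-f^w(p)$ and the half-width quotient coincides with the signed quotient $\frac{f^w(\gamma(s))-f^w(p)}{s}$, whose limit, when it exists, is precisely $Df^w(p;X_p)$ and, being a limit of non-negative terms, is $\geq 0$. This settles both directions at once: if $Df^c$ and $Df^w$ exist, the two componentwise limits exist, so $Df(p;X_p)$ exists and equals $\langle Df^c(p;X_p),Df^w(p;X_p)\rangle$; conversely, existence of $Df(p;X_p)$ forces both componentwise limits, and the monotonicity identifies the half-width limit with $Df^w(p;X_p)$.

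The main obstacle I anticipate is precisely the backward implication. Without the monotonicity assumption, existence of $\lim_{s\to0^+}|f^w(\gamma(s))-f^w(p)|/s$ would not by itself guarantee existence of the signed limit $\lim_{s\to0^+}(f^w(\gamma(s))-f^w(p))/s$, since the signed quotient could oscillate in sign while its modulus converges. The non-decreasing hypothesis removes this difficulty by fixing the sign, and it is the only place where an argument beyond routine bookkeeping is needed; the remainder is just the componentwise translation supplied by Lemmas \ref{lemma13,p2} and \ref{lemma4,p2} together with the Hausdorff-metric computation.
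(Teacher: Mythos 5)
Your proof is correct. Note, however, that this paper never proves Lemma~\ref{lemma22,p2} at all: it is imported, statement only, from reference \cite{hilal}, so there is no in-paper argument to compare yours against. Your route is the natural one and fills that gap soundly: rewrite the difference quotient in center/half-width form via Lemmas~\ref{lemma13,p2} and~\ref{lemma4,p2}, observe that Hausdorff convergence of intervals is equivalent to separate convergence of centers and half-widths (indeed, writing $x=a_1-a_2$, $y=b_1-b_2$, the metric (\ref{metric,p2}) becomes $\max\{|x-y|,|x+y|\}=|x|+|y|$, exactly as you claim), and use the monotonicity of $f^w\circ\gamma$ on $I\cap[0,\infty)$ to replace $|f^w(\gamma(s))-f^w(p)|$ by $f^w(\gamma(s))-f^w(p)$, which simultaneously yields $Df^w(p;X_p)\geq 0$. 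You also correctly isolate the only delicate point: without monotonicity the backward implication genuinely fails, since convergence of $|f^w(\gamma(s))-f^w(p)|/s$ to a positive limit does not force the signed quotient to converge.
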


\begin{theorem}\cite{hilal} \label{theorem23,p2}
	Let $E\subseteq (M,g)$ be star-shaped at $p_0 \in E$ and $f:E\rightarrow \mathbb{I}$ be an IVF with $f(p)=\langle f^c(p), f^w(p)\rangle$. Let $\gamma(s);~s\in I, ~0\in I~ \&~ \gamma(I)\subseteq E$, be  geodesic for which $\gamma(0)=p_0, ~ \&~ \dot{\gamma}(0)=X_{p_0}\in T_{p_0}(E)$ such that $(f^w \circ \gamma)(s)$ is non-decreasing for $s\in [0, \infty)$. Suppose that $f$ is gH-directionally differentiable at $p_0$,
	\begin{enumerate}
		\item [i)] if $f$ is cw-convex  at $p_0$, then
		\begin{equation}\label{eq26,p2}
			Df(p_0;X_{p_0}) \leq^{\text{min}} f(p)\ominus_{gH}f(p_0), ~~ \forall ~ p\in E ~~\text{and}~~ \forall ~\gamma_{\2}\in \Gamma_0,
		\end{equation}
		where $\Gamma_0$ is collection of geodesics joining $p_0$ and $p$ such that $\gamma_{\2}(0)=p_0$ and $\dot{\gamma}_{\2}(0)=X_{p_0}\in T_{p_0}(E).$
		\item [ii)] if $f$ is strictly cw-convex  at $p_0$, then
		$$Df(p_0;X_{p_0}) <^{\text{min}} f(p)\ominus_{gH}f(p_0), ~~ \forall ~ p\in E,~~p\neq p_0, ~~\text{and}~~ \forall ~\gamma_{\2}\in \Gamma_0,$$
		with $\gamma_{\2}(0)=p_0$ and $\dot{\gamma}_{\2}(0)=X_{p_0}\in T_{p_0}(E).$
	\end{enumerate}
\end{theorem}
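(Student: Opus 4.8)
The plan is to reduce the interval inequality to two scalar inequalities, one for the center function and one for the half-width function, both of which are delivered by the scalar result Theorem~\ref{theorem30,p2}, and then to reassemble the pieces through the order relation (\ref{orderrelation,p2}).

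First I would write both sides of the claimed inequality in center/half-width form. By Lemma~\ref{lemma22,p2} the gH-directional derivative splits as
$$Df(p_0;X_{p_0}) = \langle Df^c(p_0;X_{p_0}),~Df^w(p_0;X_{p_0})\rangle, \qquad Df^w(p_0;X_{p_0})\geq 0,$$
while Lemma~\ref{lemma13,p2} gives
$$f(p)\ominus_{gH}f(p_0) = \langle f^c(p)-f^c(p_0),~|f^w(p)-f^w(p_0)|\rangle .$$
Thus the desired $\leq^{\text{min}}$ comparison is entirely between these two intervals. Since $f$ is cw-convex at $p_0$, both $f^c$ and $f^w$ are convex at $p_0$, so I would apply Theorem~\ref{theorem30,p2}(i) separately to each, obtaining
$$f^c(p)-f^c(p_0)\geq Df^c(p_0;X_{p_0}) \qquad\text{and}\qquad f^w(p)-f^w(p_0)\geq Df^w(p_0;X_{p_0}).$$
Because $f^w\circ\gamma$ is non-decreasing and $\gamma$ joins $p_0$ to $p$, we also have $f^w(p)-f^w(p_0)\geq 0$, so the half-width on the right collapses, namely $|f^w(p)-f^w(p_0)| = f^w(p)-f^w(p_0)$.

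The conclusion of (i) then follows by inspecting the two branches of (\ref{orderrelation,p2}). If the centers are strictly ordered, that is $Df^c(p_0;X_{p_0}) < f^c(p)-f^c(p_0)$, the first branch already yields $\leq^{\text{min}}$. If instead the centers coincide, the second branch asks for a half-width comparison, and here the simplification of the absolute value together with the scalar inequality for $f^w$ gives $Df^w(p_0;X_{p_0}) \leq f^w(p)-f^w(p_0) = |f^w(p)-f^w(p_0)|$, which is exactly what the order relation requires. For part (ii) I would run the same argument but invoke the strict version Theorem~\ref{theorem30,p2}(ii) for the (now strictly convex) center function; this forces the centers to differ strictly whenever $p\neq p_0$, so the two intervals are never equal and $<^{\text{min}}$ holds.

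The main obstacle is the bookkeeping around the absolute value appearing in the half-width of the gH-difference: in the equal-center case the inequality can only be salvaged if $|f^w(p)-f^w(p_0)|$ reduces to $f^w(p)-f^w(p_0)$, which is precisely where the monotonicity hypothesis on $f^w\circ\gamma$ (equivalently, the non-negativity of $Df^w$ supplied by Lemma~\ref{lemma22,p2}) becomes indispensable. Everything else is a direct transcription of the scalar theorem applied to the two coordinate functions, combined with Lemma~\ref{lemma4,p2} to read off centers and half-widths.
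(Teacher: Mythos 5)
The paper offers no proof of this theorem: it is imported from \cite{hilal}, so there is no internal proof to compare against. Your argument is correct and is the natural one---decompose both sides into center/half-width form via Lemmas~\ref{lemma22,p2} and~\ref{lemma13,p2}, apply the scalar Theorem~\ref{theorem30,p2} to $f^c$ and $f^w$ separately, use the monotonicity of $f^w\circ\gamma$ to collapse the absolute value $|f^w(p)-f^w(p_0)|$, and then check both branches of the order relation (\ref{orderrelation,p2}); your observation that the absolute value is exactly where the monotonicity hypothesis is indispensable is the right diagnosis. The one loose end is in part (ii): Theorem~\ref{theorem30,p2}(ii) is stated for a function strictly convex on all of $E$, whereas you invoke it for $f^c$ strictly convex only at $p_0$. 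The pointwise version you need is true---convexity at $p_0$ together with star-shapedness makes the difference quotient $s\mapsto\bigl(f^c(\gamma_{p_0p}(s))-f^c(p_0)\bigr)/s$ non-decreasing, and strict convexity at $p_0$ bounds it strictly below $f^c(p)-f^c(p_0)$ for $p\neq p_0$---but as written you cite a result whose stated hypothesis you have not verified, so you should either prove this localized strict inequality directly or remark that the proof of Theorem~\ref{theorem30,p2}(ii) localizes to convexity at the point.
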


The following theorem establishes the necessary and sufficient requirement for a point to qualify as a local minimum for a gH-directionally differentiable IVF that is convex at that particular point.

\begin{theorem} \label{theorem5.5,p3}
	Let $E\subseteq (M,g)$ be star-shaped at $p_0 \in E$ and $f:E\rightarrow \mathbb{I}$ be an IVF with $f(p)=\langle f^c(p), f^w(p)\rangle$. Let $\gamma(s);~s\in I, ~0\in I~ \&~ \gamma(I)\subseteq E$, be  geodesic for which $\gamma(0)=p_0, ~ \&~ \dot{\gamma}(0)=X_{p_0}\in T_{p_0}(E)$ such that $(f^w \circ \gamma)(s)$ is non-decreasing for $s\in [0, \infty)$. Suppose that $f$ is gH-directionally differentiable and cw-convex at $p_0$, then $p_0$ is a local minimum point of $f$ if and only if $0\leq^{\text{min}} Df(p_0;X_{p_0}),~ \forall~X_{p_0} \in T_{p_0}(E)$.
\end{theorem}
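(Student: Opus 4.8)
The plan is to prove the two implications of the biconditional separately, and in each direction to reduce the interval-valued assertion to a real-valued statement about the center function $f^c$, using Lemma \ref{lemma22,p2} to control the half-width and the monotonicity hypothesis on $f^w\circ\gamma$ to pin down its sign.

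For necessity (if $p_0$ is a local minimum, then $0\leq^{\text{min}}Df(p_0;X_{p_0})$), I would first observe that local minimality of $f$ forces local minimality of its center. Indeed, from $f(p_0)\leq^{\text{min}}f(p)$ and the order relation (\ref{orderrelation,p2}), in either branch ($f^c(p_0)<f^c(p)$, or $f^c(p_0)=f^c(p)$ with $f^w(p_0)\leq f^w(p)$) one reads off $f^c(p_0)\leq f^c(p)$ for every $p\in B(p_0,\delta)\cap E$. Since $\gamma(s)\to p_0$ as $s\to 0^+$ and $\gamma(I)\subseteq E$, the point $\gamma(s)$ lies in this neighborhood for all small $s>0$, so each difference quotient $\big(f^c(\gamma(s))-f^c(p_0)\big)/s$ is nonnegative; passing to the limit gives $Df^c(p_0;X_{p_0})\geq 0$. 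By Lemma \ref{lemma22,p2} we have $Df(p_0;X_{p_0})=\langle Df^c(p_0;X_{p_0}),\,Df^w(p_0;X_{p_0})\rangle$ with $Df^w(p_0;X_{p_0})\geq 0$, whence the order relation (\ref{orderrelation,p2}) yields $0\leq^{\text{min}}Df(p_0;X_{p_0})$ (the strict-center case when $Df^c(p_0;X_{p_0})>0$, and the equal-center case with nonnegative half-width when $Df^c(p_0;X_{p_0})=0$).

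For sufficiency I would invoke Theorem \ref{theorem23,p2}, which (via cw-convexity of $f$ at $p_0$) gives $Df(p_0;X_{p_0})\leq^{\text{min}}f(p)\ominus_{gH}f(p_0)$ for all $p\in E$. Combining this with the hypothesis $0\leq^{\text{min}}Df(p_0;X_{p_0})$ and transitivity of $\leq^{\text{min}}$ produces $0\leq^{\text{min}}f(p)\ominus_{gH}f(p_0)$. Rewriting the gH-difference via Lemma \ref{lemma13,p2} as $\langle f^c(p)-f^c(p_0),\,|f^w(p)-f^w(p_0)|\rangle$ and reading the order relation (\ref{orderrelation,p2}) splits into two cases: if the centers differ then $f^c(p_0)<f^c(p)$ and $f(p_0)\leq^{\text{min}}f(p)$ follows at once; if $f^c(p)=f^c(p_0)$ then I still need $f^w(p_0)\leq f^w(p)$ in order to conclude $f(p_0)\leq^{\text{min}}f(p)$. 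This last inequality comes from the monotonicity hypothesis: using star-shapedness of $E$ at $p_0$, the point $p$ is reached as $\gamma(s_p)$ along a geodesic from $p_0$ with some $s_p\geq 0$, and since $f^w\circ\gamma$ is non-decreasing, $f^w(p)=(f^w\circ\gamma)(s_p)\geq (f^w\circ\gamma)(0)=f^w(p_0)$. Hence $f(p_0)\leq^{\text{min}}f(p)$ for every $p\in E$, so $p_0$ is in fact a global minimum and a fortiori a local one.

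The main obstacle is exactly this equal-center case in the sufficiency direction: the gH-difference records only $|f^w(p)-f^w(p_0)|$ and so discards the sign of $f^w(p)-f^w(p_0)$, meaning the half-width inequality cannot be recovered from $0\leq^{\text{min}}f(p)\ominus_{gH}f(p_0)$ alone. This is precisely where the non-decreasing assumption on $f^w\circ\gamma$ is indispensable, and I would take care to state explicitly that every $p\in E$ arises as $\gamma(s_p)$ on a geodesic emanating from $p_0$ (by star-shapedness), so that the monotonicity hypothesis genuinely applies to all feasible points.
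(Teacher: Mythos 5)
Your proposal is correct and follows essentially the same route as the paper: necessity via nonnegative difference quotients of $f^c$ together with the monotonicity of $f^w\circ\gamma$ (which is exactly what underlies the $Df^w\geq 0$ conclusion of Lemma \ref{lemma22,p2}), and sufficiency via Theorem \ref{theorem23,p2}, transitivity of $\leq^{\text{min}}$, the decomposition of Lemma \ref{lemma13,p2}, and the monotonicity hypothesis to recover the half-width inequality that the absolute value in the gH-difference discards. Your explicit remark about that equal-center case is precisely the role the inequality (\ref{eq14,p3}) plays in the paper's own argument.
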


\begin{proof}
	Let $p_0$ be a local minimum point of $f$. For sufficiently small, $s\geq 0$, we have $f(p_0)\leq^{\text{min}} f(\gamma(s))$, where $\gamma$ is arbitrary geodesic emanating from $p_0$ in any arbitrary direction $X_{p_0}\in T_{p_0}(E)$, which from order relation (\ref{orderrelation,p2}) gives $f^c(p_0)\leq f^c(\gamma(s))$ and hence one has
	\begin{equation}\label{eq11,p3}
		Df^c(p_0;X_{p_0})=\lim\limits_{s\rightarrow0^+}\frac{f^c(\gamma(s))-f^c(p_0)}{s}\geq 0.
	\end{equation}
	Also, by hypothesis, $(f^w\circ \gamma)(s)$ is non-decreasing for $s\geq0$. So, we have
	\begin{equation}\label{eq12,p3}
		Df^w(p_0;X_{p_0})=\lim\limits_{s\rightarrow0^+}\frac{f^w(\gamma(s))-f^w(p_0)}{s}\geq 0.
	\end{equation}
	From expressions (\ref{eq11,p3})and (\ref{eq12,p3}), one can deduce that
	\begin{align*}
		Df^c(p_0;X_{p_0})>0, ~~& \text{when~}Df^c(p_0;X_{p_0}) \neq 0;\\
		Df^w(p_0;X_{p_0})\geq 0, ~~& \text{when } Df^c(p_0;X_{p_0}) = 0.
	\end{align*}
	This from order relation (\ref{orderrelation,p2}), yields~ $0\leq^{\text{min}}Df(p_0;X_{p_0})$. Since $\gamma$ is arbitrary, one has
	$$0\leq^{\text{min}}Df(p_0;X_{p_0}) ~~ \forall ~X_{p_0}\in T_{p_0}(E).$$
	
	Conversely, suppose that $0\leq^{\text{min}}Df(p_0;X_{p_0}) ~~ \forall ~X_{p_0}\in T_{p_0}(E).$ From Theorem \ref{theorem23,p2} and transitivity of total order, we have $0\leq^{\text{min}}f(p)\ominus_{gH}f(p_0)~ \forall~p\in E$. Which from Lemma \ref{lemma13,p2}, gives 
	$$0\leq^{\text{min}} \langle f^c(p)-f^c(p_0), |f^w(p)-f^w(p_0)| \rangle.$$
	From order relation (\ref{orderrelation,p2}), we have
	\begin{equation} \label{eq13,p3}
		0\leq f^c(p)-f^c(p_0), ~~ \forall~p\in E,
	\end{equation}
	Since, for any $p\in E$ and the geodesic $\gamma(s), s \in [0,1],$ joining $p_0=\gamma(0)$ and $p=\gamma(1)$, $(f^w\circ \gamma)(s)$ is non-decreasing for $s>0$. So,
	\begin{equation} \label{eq14,p3}
		f^w(p_0) \leq f^w(p), ~~ \forall ~ p\in E.
	\end{equation}
	From (\ref{eq13,p3}) and (\ref{eq14,p3}), one can deduce that
	\begin{align*}
		f^c(p_0)<f^c(p), ~~& \text{when } f^c(p_0)\neq f^c(p);\\
		~f^w(p_0)\leq f^w(p),~~ & \text{when } f^c(p_0)= f^c(p).
	\end{align*}
	This together with order relation (\ref{orderrelation,p2}), yields
	$$f(p_0)\leq^{\text{min}}f(p) ~~ \forall ~ p \in E.$$
	Thus, $p_0$ is a local minimum of $f(p)$.
	\end{proof}

\section{Optimality conditions for constrained optimization programming problem}
In this section, we present the KKT type optimality conditions for real-valued as well as interval-valued optimization problem on $(M,g)$. We first consider the following real-valued optimization problem on $(M,g)$.
$$\begin{array}{lll}
	(P_2)\hspace{1cm} & \text{minimize} & f(p)\vspace{0.1cm}\hspace{2cm}\\ 
	& \text{subject to} & g_i(p)\leq 0, ~~ i\in \{1,2,...,m\},
\end{array}$$
where $f,g_i:E\rightarrow \mathbb{R},~ i\in \{1,2,...,m\}$, $E\subseteq(M,g)$ and the set $X=\{p\in E:g_i(p)\leq 0,~ 1\leq i \leq m\}$ is the feasible set.

\begin{definition}
	\rm A function $f:E\rightarrow \mathbb{R}$, defined on a subset $E\subseteq (M,g)$, is said to be non-constant on $E$ if for any $p,q \in E, ~ p\neq q,$ we have $f(p)\neq f(q)$.
\end{definition}

\begin{definition}
	\rm A IVF function $f:E\rightarrow \mathbb{I}$, defined on a subset $E\subseteq (M,g)$, is said to be non-constant on $E$ if for any $p,q \in E, ~ p\neq q,$ we have $f(p)\neq f(q)$.
\end{definition}
\vspace{0.15cm}
The following theorem presents the conditions that are sufficient for $p_0$ to be an optimal solution for $(P_2)$.

\begin{theorem} \label{theoremhash1,p3}
	Let $E\subseteq (M,g)$ be star-shaped at $p_0\in E$. Let $p_0 \in X$  be a feasible point and $J=\{i:g_i(p_0)=0\}$. Suppose that the objective function $f$ and the constraints $g_i,~i\in J$, are convex at $p_0$, and $f$ and $g_i,~ i\in \{1,2,...,m\}$, are directionally differentiable at $p_0$. If there exist scalars $0\leq \mu_i \in \mathbb{R},~ i \in \{1,2,...,m\}$, such that
	\begin{enumerate}
		\item[(i)] $Df(p_0;X_{p_0}) + \displaystyle\sum_{i=1}^{m}\mu_i Dg_i(p_0;X_{p_0}) \geq 0;$
		\item [(ii)] $\mu_i g_i(p_0)=0 ~~ \forall~ i \in \{1,2,...,m\}.$
	\end{enumerate}
	Then, $p_0$ is an optimal solution to Problem $(P_2)$. Moreover, if the objective function $f$ is non-constant on $X$, then $p_0$ is strict optimal solution to Problem $(P_2)$.
\end{theorem}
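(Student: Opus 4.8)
The plan is to argue by contradiction. Suppose $p_0$ is not an optimal solution, so there exists a feasible point $p \in X$ with $f(p) < f(p_0)$. Since $E$ is star-shaped at $p_0$, fix a geodesic $\gamma_{\2}(s)$, $s \in [0,1]$, with $\gamma_{\2}(0) = p_0$, $\gamma_{\2}(1) = p$, and $\dot{\gamma}_{\2}(0) = X_{p_0}$. The key is to exploit the convexity of $f$ and of the active constraints $g_i$, $i \in J$, at $p_0$ together with Theorem \ref{theorem30,p2}, which converts convexity-at-a-point into the subgradient-type inequality $f(p) - f(p_0) \geq Df(p_0; X_{p_0})$. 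Applying this to $f$ gives $Df(p_0; X_{p_0}) \leq f(p) - f(p_0) < 0$.

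Next I would control the constraint directional derivatives. For each active index $i \in J$ we have $g_i(p_0) = 0$ and, since $p$ is feasible, $g_i(p) \leq 0$; convexity of $g_i$ at $p_0$ and Theorem \ref{theorem30,p2} then yield $Dg_i(p_0; X_{p_0}) \leq g_i(p) - g_i(p_0) = g_i(p) \leq 0$. For the inactive indices $i \notin J$ we have $g_i(p_0) < 0$, so the complementary slackness condition (ii) forces $\mu_i = 0$, which removes those terms from the sum in condition (i). Combining these observations, I would estimate
\begin{equation*}
	Df(p_0; X_{p_0}) + \sum_{i=1}^{m} \mu_i\, Dg_i(p_0; X_{p_0}) = Df(p_0; X_{p_0}) + \sum_{i \in J} \mu_i\, Dg_i(p_0; X_{p_0}) < 0,
\end{equation*}
since the first term is strictly negative and every summand is a product of $\mu_i \geq 0$ with a nonpositive quantity, hence nonpositive. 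This contradicts the KKT stationarity hypothesis (i), which asserts the same expression is $\geq 0$. The contradiction shows no such $p$ exists, so $p_0$ is optimal.

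For the strict optimality claim under non-constancy of $f$, I would rerun the argument assuming only $f(p) \leq f(p_0)$ for some feasible $p \neq p_0$ and derive $f(p) = f(p_0)$; non-constancy then gives $f(p) \neq f(p_0)$, a contradiction, so no feasible $p \neq p_0$ satisfies $f(p) \leq f(p_0)$, i.e., $p_0$ is a strict optimal solution. The main obstacle I anticipate is bookkeeping the sign chain carefully: one must ensure that each $\mu_i Dg_i(p_0; X_{p_0})$ is genuinely nonpositive (using $\mu_i \geq 0$ together with $Dg_i(p_0; X_{p_0}) \leq 0$ for active constraints, and $\mu_i = 0$ for inactive ones from complementary slackness), so that the whole directional expression stays strictly below zero. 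The subtlety is that the strict inequality must come solely from the $Df$ term, so I would keep the objective's contribution strict and the constraint contributions merely nonpositive, which is exactly what the hypotheses deliver.
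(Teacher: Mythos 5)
Your proof is correct and relies on essentially the same ingredients as the paper's: Theorem \ref{theorem30,p2} applied to the objective and to the active constraints, complementary slackness forcing $\mu_i=0$ for inactive constraints, and the same sign bookkeeping. The only difference is organizational — you argue by contradiction (assume $f(p)<f(p_0)$, derive $Df(p_0;X_{p_0})+\sum_{i=1}^{m}\mu_i Dg_i(p_0;X_{p_0})<0$), whereas the paper argues directly (derive $Df(p_0;X_{p_0})\geq 0$ and then $f(p)\geq f(p_0)$ for all feasible $p$), which is just the contrapositive of the same argument.
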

\begin{proof}
	For any feasible solution $p \in X$, we have from $g_i(p)\leq 0, ~ \forall ~i \in \{1,2,...,m\}$, that 
	\begin{equation}\label{eq15,p3}
		g_i(p)\leq g_i(p_0) =0, ~~ \forall~ i \in J.
	\end{equation}
	Let $\gamma_{\2}(s), ~s\in [0,1]$, be a geodesic such that  $\gamma_{\2}(0)=p_0, ~\gamma_{\2}(1)=p$ and $ \dot{\gamma}_{\2}(0)=X_{p_0}$. From (\ref{eq15,p3}) and Theorem \ref{theorem30,p2}, we have
	\begin{equation}\label{eq16,p3}
		0\geq g_i(p)-g_i(p_0) \geq Dg_i(p_0;X_{p_0}), ~~ \forall ~i \in J.
	\end{equation}
	For $\mu_i\geq 0,~ i \in J$, (\ref{eq16,p3}) yields that
	$$\displaystyle \sum_{i\in J} \mu_iDg_i(p_0;X_{p_0}) \leq 0.$$
	This together with $\mu_ig_i(p_0)=0$, gives
	\begin{equation}\label{eq17,p3}
		\displaystyle \sum_{i=1}^{m} \mu_iDg_i(p_0;X_{p_0}) \leq 0, ~~~ (\mu_i=0,~ \forall~i \notin J).
	\end{equation}
	From $(i)$ and (\ref{eq17,p3}), we have
	$$Df(p_0;X_{p_0})\geq -\displaystyle \sum_{i=1}^{m} \mu_iDg_i(p_0;X_{p_0}) \geq 0.$$
	This from convexity of $f$ at $p_0$, yields
	$$f(p)-f(p_0) \geq Df(p_0;X_{p_0}) \geq 0, ~~\forall ~p \in X,$$
	$$\text{i.e.}, ~~~f(p)\geq f(p_0), ~~ \forall ~ p\in X.$$
	Thus, $p_0$ is an optimal solution to Problem $(P_2)$.
	
	Moreover, if $f$ is non-constant on $X$, then $f(p)\neq f(p_0)$ for any $p\in X$. So, $f(p)>f(p_0) ~ \forall ~ p\in X$. Hence, $p_0$ is strict optimal solution to Problem $(P_2)$.
	\end{proof}
The following example is in support of Theorem \ref{theoremhash1,p3}.
\begin{example}
	\rm Consider the Riemannian manifold \( \mathit{M} \) as defined in Example \ref{example6*,p2}. Let $\mathit{E}=\left\{e^{i\theta}: \theta\in [0,\pi]\right\}$ be a subset of $\mathit{M}$ which is star-shaped $p_0 = e^{i\frac{\pi}{2}}=i$. The geodesic segment joining $p_0=i$ with any $p=e^{i\theta}\in E$ is given by
	$$\gamma_{\2}(s) = e^{i(\frac{\pi}{2}+s(\theta - \frac{\pi}{2}))}, ~~~ s\in [0,1],$$	
	and the  tangent vector to the geodesic is  $X_{p_0}=\dot{\gamma}_{\2}(0)=\frac{\pi}{2}-\theta \in T_{p_0}(E) \subset \mathbb{R}.$
	
	Let $f: M \rightarrow \mathbb{R}$ be the real valued objective function and $g_i:M \rightarrow \mathbb{R}, ~i\in \{1,2,3\},$ be the real valued constraints for the following optimization problem,
	$$\begin{array}{lllll}
		(P^*)~~~~~ & \text{minimize} & f(p)&\hspace{-0cm}=f(e^{i\theta})&\hspace{-0cm}=\frac{\pi}{2} - \theta \vspace{0cm}\hspace{2cm}\vspace{0.2cm}\\ 
		& \text{subject to} & g_1(p)&\hspace{-0cm}=g_1(e^{i\theta}) &\hspace{-0cm}= \theta - \frac{\pi}{2}\leq 0,\vspace{0.1cm}\\
		& & g_2(p)&\hspace{-0cm}=g_2(e^{i\theta}) &\hspace{0cm}= e^{(\theta-\frac{\pi}{2})}-1\leq 0,\vspace{0.1cm}\\
		& & g_3(p)&\hspace{-0cm}=g_3(e^{i\theta}) &\hspace{-0cm}= -\ln(9\pi^2-(\theta - \frac{\pi}{2})^2)\leq 0.
	\end{array}$$
	Here, the feasible region is $\mathcal{X}=\{e^{i\theta} : \theta \in [0, \frac{\pi}{2}]\}$ and the set $J=\{i:g_i(p_0)=0\}=\{1,2\}.$ In this case, both the objective function and all the constraints are convex at $p_0$.
	
	The directional derivatives of the functions involved in $(P^*)$ at $p_0=i$ in the direction $X_{p_0}= \frac{\pi}{2}-\theta$ are given by
	\begin{align*}
		Df(i; \frac{\pi}{2}-\theta)&=\frac{\pi}{2}-\theta;\\
		Dg_1(i; \frac{\pi}{2}-\theta)&=\theta -\frac{\pi}{2};\\
		Dg_2(i; \frac{\pi}{2}-\theta)&=\theta -\frac{\pi}{2};\\
		Dg_3(i; \frac{\pi}{2}-\theta)&=0.
	\end{align*}
	It is easy to see that the conditions {\it (i)} and {\it (ii)} of Theorem \ref{theoremhash1,p3} hold at $p_0=i$ with $(\mu_1,\mu_2,\mu_3)=(\frac{1}{2},\frac{1}{2},0)$. So, by Theorem \ref{theoremhash1,p3}, we conclude that $p_0=i$ is the optimal solution to the Problem $(P^*)$. The optimal value is $0$. Moreover, the objective function $f$ is non-constant on $\mathcal{X}$, we have that $p_0$ is strict optimal solution to Problem $(P^*)$.
\end{example}

Next, we consider an IVOP problem on $(M,g)$ in which the objective function is interval-valued and constraints are real valued, as following
$$\begin{array}{lll}
	(P_3)\hspace{1cm} & \text{minimize} & f(p)=\langle f^c(p), f^w(p) \rangle \vspace{0.1cm}\hspace{2cm}\\ 
	& \text{subject to} & g_i(p)\leq 0, ~~ i\in \{1,2,...,m\},
\end{array}$$
where $f:E \rightarrow \mathbb{I}~,g_i:E\rightarrow \mathbb{R},~ i\in \{1,2,...,m\}$, $E\subseteq(M,g)$ and the set $X=\{p\in E:g_i(p)\leq 0,~ 1\leq i \leq m\}$ is the feasible set.\vspace{0.15cm}

The following theorem presents the conditions that are sufficient for $p_0$ to be an optimal solution for $(P_3)$.

\begin{theorem}
	\label{theoremhash2,p3}
	Let $E\subseteq (M,g)$ be star-shaped at $p_0\in E$. Let $p_0 \in X$  be a feasible point and $J=\{i:g_i(p_0)=0\}$. Suppose that the interval-valued objective function $f$ is cw-convex at $p_0$ and the real-valued constraints $g_i,~i\in J$, are convex at $p_0$. Let $\gamma(s),~s\in I,~0\in I,~\&~\gamma(I)\subseteq E,$ be any geodesic such that $\gamma(0)=p_0$, $\dot{\gamma}(0)=X_{p_0}$ and $(f^w\circ \gamma)(s)$ is non-decreasing for $s\in I\cap [0, \infty)$. Suppose that $f$ is gH-directionally differentiable at $p_0$ and $g_i,~ i\in \{1,2,...,m\}$, are directionally differentiable at $p_0$. If there exist scalars $0\leq \mu_i \in \mathbb{R},~ i \in \{1,2,...,m\}$, such that
	\begin{enumerate}
		\item[(i)]  $0 \leq^{\text{min}} Df(p_0;X_{p_0}) + \displaystyle\sum_{i=1}^{m}\mu_i Dg_i(p_0;X_{p_0});$
		\item [(ii)] $\mu_i g_i(p_0)=0 ~~ \forall~ i \in \{1,2,...,m\}.$
	\end{enumerate}
	Then, $p_0$ is an optimal solution to Problem $(P_3)$. Moreover, if the objective function $f$ is non-constant on $X$, then $p_0$ is strict optimal solution to Problem $(P_3)$.
\end{theorem}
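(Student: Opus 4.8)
The plan is to carry out the constraint bookkeeping exactly as in the proof of Theorem~\ref{theoremhash1,p3}, and then to convert the resulting scalar information about $Df(p_0;X_{p_0})$ into the interval inequality $f(p_0)\leq^{\text{min}}f(p)$ using the interval machinery of the previous sections together with the monotonicity hypothesis on $f^w$. To begin, I would fix an arbitrary feasible point $p\in X$ and let $\gamma_{\2}(s),~s\in[0,1]$, be a geodesic with $\gamma_{\2}(0)=p_0$, $\gamma_{\2}(1)=p$ and $\dot{\gamma}_{\2}(0)=X_{p_0}$; every directional derivative below is taken in this direction.

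First I would treat the constraints. Since the $g_i$ are real-valued, convex at $p_0$ for $i\in J$, and directionally differentiable, feasibility gives $g_i(p)\leq g_i(p_0)=0$ for $i\in J$, and Theorem~\ref{theorem30,p2} yields $Dg_i(p_0;X_{p_0})\leq g_i(p)-g_i(p_0)\leq 0$. Multiplying by $\mu_i\geq 0$, summing over $i\in J$, and using the complementary slackness condition (ii) to discard the inactive indices (so that $\mu_i=0$ for $i\notin J$), I obtain
$$S:=\sum_{i=1}^{m}\mu_i\,Dg_i(p_0;X_{p_0})\leq 0.$$
This is the scalar step of Theorem~\ref{theoremhash1,p3}; the only change is that $S$ must now be combined with an interval rather than a scalar.

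Next I would extract $0\leq^{\text{min}}Df(p_0;X_{p_0})$ from hypothesis (i). Reading (i) as $0\leq^{\text{min}}Df(p_0;X_{p_0})+S$, with $S\in\mathbb{R}$ regarded as the degenerate interval $\langle S,0\rangle$, Lemma~\ref{lemma2,p2}(iii) gives $-S\leq^{\text{min}}Df(p_0;X_{p_0})$; since $S\leq 0$ we have $0\leq^{\text{min}}-S$, so transitivity of the total order~(\ref{orderrelation,p2}) yields $0\leq^{\text{min}}Df(p_0;X_{p_0})$. Because $f$ is cw-convex and gH-directionally differentiable at $p_0$ and $(f^w\circ\gamma)$ is non-decreasing, Theorem~\ref{theorem23,p2} applies and gives $Df(p_0;X_{p_0})\leq^{\text{min}}f(p)\ominus_{gH}f(p_0)$; chaining this with the previous inequality produces
$$0\leq^{\text{min}}f(p)\ominus_{gH}f(p_0),\qquad\forall\,p\in X.$$

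Finally I would unpack this into an ordering of $f(p_0)$ and $f(p)$. By Lemma~\ref{lemma13,p2} the left-hand side equals $\langle f^c(p)-f^c(p_0),\,|f^w(p)-f^w(p_0)|\rangle$, so the order relation~(\ref{orderrelation,p2}) forces $f^c(p_0)\leq f^c(p)$, with strict inequality whenever the centers differ. The step I expect to be the main obstacle is the half-width: the gH-difference only reports $|f^w(p)-f^w(p_0)|$ and therefore does not by itself determine the sign of $f^w(p)-f^w(p_0)$. This is precisely where the standing hypothesis that $(f^w\circ\gamma)(s)$ is non-decreasing for $s\geq 0$ enters—exactly as in the converse part of Theorem~\ref{theorem5.5,p3}, it yields $f^w(p_0)=(f^w\circ\gamma_{\2})(0)\leq(f^w\circ\gamma_{\2})(1)=f^w(p)$. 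Combining the two cases of~(\ref{orderrelation,p2}) (centers differ, or centers equal with half-widths ordered) then gives $f(p_0)\leq^{\text{min}}f(p)$ for every $p\in X$, so no feasible $p$ satisfies $f(p)<^{\text{min}}f(p_0)$ and $p_0$ is optimal. For the strict statement, non-constancy of $f$ gives $f(p)\neq f(p_0)$ for $p\neq p_0$, which together with $f(p_0)\leq^{\text{min}}f(p)$ and totality of the order upgrades this to $f(p_0)<^{\text{min}}f(p)$, ruling out any $p$ with $f(p)\leq^{\text{min}}f(p_0)$ and giving strict optimality.
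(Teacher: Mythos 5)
Your proof is correct and takes essentially the same route as the paper's: identical constraint bookkeeping yielding $\sum_{i=1}^{m}\mu_i Dg_i(p_0;X_{p_0})\leq 0$ (the paper's inequality (\ref{eq17,p3})), the same use of Lemma \ref{lemma2,p2}(iii) plus transitivity of the total order to extract $0\leq^{\text{min}}Df(p_0;X_{p_0})$, and then the conclusion via cw-convexity; the only difference is that where the paper cites Theorem \ref{theorem5.5,p3} as a black box, you inline its converse argument (Theorem \ref{theorem23,p2}, Lemma \ref{lemma13,p2}, and the monotonicity of $f^w\circ\gamma$). Your explicit handling of the strict-optimality clause is a welcome addition, since the paper's proof leaves that part unproved.
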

\begin{proof}
	From Lemma \ref{lemma2,p2}{\bf (iii)} and condition {\it (i)}, we have
	\begin{equation}\label{eq18,p3}
		-\displaystyle\sum_{i=1}^{m}\mu_i Dg_i(p_0;X_{p_0})\leq^{\text{min}} Df(p_0;X_{p_0}).
	\end{equation}
	By hypothesis of Theorem \ref{theoremhash1,p3} and Theorem \ref{theoremhash2,p3}, we can directly use inequality (\ref{eq17,p3}) which together with inequality (\ref{eq18,p3}) and transitivity of total order, gives $0 \leq^{\text{min}} Df(p_0;X_{p_0})$. From Theorem \ref{theorem5.5,p3}, it yields that $p_0$ is an optimal solution to Problem $(P_3)$.
	\end{proof}

Theorem \ref{theoremhash2,p3} provides the sufficient conditions for $(P_3)$ to have an optimal solution. For this, the domain of the objective function and the constraints within $(P_3)$ is required to be star-shaped at a particular point. As we know, the concept of a star-shaped set encompasses broader scope than that of a convex set. Moreover, the condition for the domain being star-shaped at a particular point allows one to discuss the convexity of the functions at that point. Consequently, the KKT-type conditions furnished by Theorem \ref{theoremhash2,p3} possess a wider applicability compared to those proposed by the authors in references such as \cite{chalco-cano,chen,sadikur,bilal1,bilal2,wu1,wu2}. To illustrate the validity of Theorem \ref{theoremhash2,p3}, we consider an example which follows next. This example cannot be effectively addressed using the methodologies advanced by the aforementioned authors. However, it becomes solvable by leveraging the insights provided by Theorem \ref{theoremhash2,p3}.
\begin{example}
	\rm The collection $S^2_{++}$ of $2 \times 2$ symmetric positive definite matrices with entries from $\mathbb{R}$ is a Riemannian manifold with Riemannian metric:
	$$g_p(X,Y) = Tr(p^{-1}Xp^{-1}Y), ~~~ \forall~ p \in S^2_{++}, ~~~ X,Y \in T_p(S^2_{++}).$$
	The unique minimal geodesic joining $p, q \in S^2_{++}$ is given by 
	$$\gamma_{\1}(s) = p^{\frac{1}{2}}(p^{-\frac{1}{2}}qp^{-\frac{1}{2}})^sp^{\frac{1}{2}}, ~~~ \forall ~ s \in [0,1].$$
	For more details, one can refer to \cite{bacak,serge}.\\
	
	\noindent Let $p_0=I$, $p_1=2I$ and $p_2=\begin{pmatrix}
		1&0\\0&2
	\end{pmatrix}$ be three matrices in $S^2_{++}$, where $I$ is the identity matrix. Then the geodesics emanating from $p_0$ to $p_1$ and $p_2$ are respectively given by:
	$$\gamma_{\7}(s)=2^sI, ~s\in[0,1] ~~\text{and}~~ \gamma_{\8}(s)=p_2^s, ~s\in[0,1].$$
	Suppose that,
	$$E=\left\{p\in S^2_{++}:p=\gamma_{\7}(s)~\text{or}~ p=\gamma_{\8}(s), s \in [0,1]\right\}.$$
	Clearly, $E$ is star-shaped at $p_0$.\\
	
	\noindent We consider the following Interval-valued optimization problem on $E$,
	$$\begin{array}{lllll}
		(P^{**})~~~~	& \text{minimize} & f(p)=\langle f^c(p),f^w(p)\rangle; \vspace{0.1cm}\hspace{2cm}\\ 
		& \text{subject to} & g_i(p)\leq 0, ~~~ i \in \{1,2,3\}.
	\end{array}$$
	where, $f: E \rightarrow\mathbb{I}$ is defined for any $p\in E$ as follows:
	$$f(p) = \begin{cases}
		\left\langle \ln(\det(p)),1\right\rangle; & p=\gamma_{\7}(s),s\in[0,1],\\
		\langle 0,~ 1 \rangle; & p=\gamma_{\8}(s),s\in[0,1],\\
	\end{cases}$$
	$g_i: E \rightarrow\mathbb{R}, ~i\in \{1,2,3\}$ are defined for any $p\in S^2_{++}$, as follows:
	\begin{align*}
		g_1(p) &= \begin{cases}
			-\ln(\det(p)); & p=\gamma_{\7}(s),s\in[0,1],\\
			0; & p=\gamma_{\8}(s),s\in[0,1],\\
		\end{cases}\\
		g_2(p) &= \begin{cases}
			-\left(\ln(\det(p))\right)^2-1; & p=\gamma_{\7}(s),s\in[0,1],\\
			-1; & p=\gamma_{\8}(s),s\in[0,1],
		\end{cases}\\
		g_3(p) &= \begin{cases}
			\ln(\det(p))-1; & p=\gamma_{\7}(s),s\in[0,1],\\
			-1; & p=\gamma_{\8}(s),s\in[0,1].
		\end{cases}
	\end{align*}
	Here, the feasible region is:
	$$X=\left\{p=\gamma_{\7}(s):s\in[0,1], 1\leq \det(p)\leq e\right\}\bigcup \left\{p=\gamma_{\8}(s):s\in[0,1]\right\}.$$
	Also, for any $\gamma(s)$ emanating from $p_0$ to any $p\in E$, we have 
	$$(f^w \circ \gamma)(s) = 1,~s\in[0,1].$$
	which is non-decreasing in $s\geq 0$. Also, one can see $f$ is cw-convex at $p_0$. The set $j=\{i:g_i(p_0)=0\}=\{1\}$, and $g_1$ and $g_3$ are convex at $p_0$ but $g_2$ fails to be convex at $p_0$.\\
	
	\noindent The gH-directional derivative of $f$ at $p_0$ in any direction $X_{p_0}=\exp_{p_0}^{-1}q\in T_{p_0}(E)$ is given by
	$$Df(p_0;X_{p_0})= \begin{cases}
		\langle \ln(det(q)),0 \rangle; ~~ q=\gamma_{\7}(s),s\in[0,1],\\
		\langle 0,0 \rangle; ~~ q=\gamma_{\8}(s),s\in[0,1].
	\end{cases}$$
	The directional derivatives of $g_1,g_2$ and $g_3$ at $p_0$ in any direction $X_{p_0}=\exp^{-1}_{p_0}q\in T_{p_0}(E)$ are given by
	\begin{align*}
		Dg_1(p_0;X_{p_0})&= \begin{cases}
			-\ln(\det(q)); ~~ q=\gamma_{\7}(s),s\in[0,1],\\
			0; ~~ q=\gamma_{\8}(s),s\in[0,1],
		\end{cases}\\
		Dg_2(p_0;X_{p_0})&= \begin{cases}
			0; ~~ q=\gamma_{\7}(s),s\in[0,1],\\
			0; ~~ q=\gamma_{\8}(s),s\in[0,1],
		\end{cases}\\
		Dg_3(p_0;X_{p_0})&= \begin{cases}
			\ln(\det(p)); ~~ q=\gamma_{\7}(s),s\in[0,1],\\
			0; ~~ q=\gamma_{\8}(s),s\in[0,1].
		\end{cases}
	\end{align*}
	One can check for $(\mu_1,\mu_2,\mu_3)=(1,0,0)$ that conditions {\it (i)} and {\it (ii)} in Theorem \ref{theoremhash2,p3} hold true at $p_0$. Hence, from Theorem \ref{theoremhash2,p3}, we conclude that $p_0$ is an optimal solution to IVOP $(P^{**})$. The optimal value is $\langle 0,1 \rangle.$ Moreover, the objective function $f$ is not non-constant on $X$, we have that $p_0$ is not a strict optimal solution to Problem $(P^{**})$.
\end{example}

In the next theorem, we present conditions that are sufficient for $p_0$ to be an optimal solution to $(P_3)$ in terms of center function $f^c$ and half-width function $f^w$.

\begin{theorem}
	\label{theoremhash3,p3}
	Let $E\subseteq (M,g)$ be star-shaped at $p_0\in E$. Let $p_0 \in X$  be a feasible point and $J=\{i:g_i(p_0)=0\}$. Suppose that the real-valued constraints $g_i,~i\in J$, are convex at $p_0$ and  $g_i,~ i\in \{1,2,...,m\}$, are directionally differentiable at $p_0$. Then we have the following;
	\begin{enumerate}
		\item Suppose that the center function $f^c$ of the objective function $f$ is directionally differentiable at $p_0$, convex at $p_0$ and non-constant on $X$. If there exist scalars $0\leq \mu_i \in \mathbb{R},~ i \in \{1,2,...,m\}$, such that
		\begin{enumerate}
			\item[(i)]  $Df^c(p_0;X_{p_0}) + \displaystyle\sum_{i=1}^{m}\mu_i Dg_i(p_0;X_{p_0})\geq 0;$
			\item [(ii)] $\mu_i g_i(p_0)=0 ~~ \forall~ i \in \{1,2,...,m\}.$
		\end{enumerate}
		Then, $p_0$ is a strict optimal solution to Problem $(P_3)$;
		\item Suppose that the center function $f^c$ of the objective function $f$ is constant on $X$ and the half-width function $f^w$ of the objective function $f$ is directionally differentiable at $p_0$ and convex at $p_0$. If there exist scalars $0\leq \mu_i \in \mathbb{R},~ i \in \{1,2,...,m\}$, such that
		\begin{enumerate}
			\item[(i)]  $Df^w(p_0;X_{p_0}) + \displaystyle\sum_{i=1}^{m}\mu_i Dg_i(p_0;X_{p_0})\geq 0;$
			\item [(ii)] $\mu_i g_i(p_0)=0 ~~ \forall~ i \in \{1,2,...,m\}.$
		\end{enumerate}
		Then, $p_0$ is an optimal solution to Problem $(P_3)$. Moreover, if $f^w$ is non-constant on $X$, then $p_0$ is strict optimal solution to Problem $(P_3)$.
	\end{enumerate}
\end{theorem}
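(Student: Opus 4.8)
The plan is to reduce both parts to the already-established real-valued sufficiency result, Theorem \ref{theoremhash1,p3}, applied to a single scalar function (the center $f^c$ in part (1) and the half-width $f^w$ in part (2)), and then to translate the resulting scalar optimality back into the total order $\leq^{\text{min}}$ via the order relation (\ref{orderrelation,p2}). The constraint hypotheses of Theorem \ref{theoremhash1,p3} are already in force throughout, so in each case only the role of the objective changes.

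For part (1), I would regard $f^c$ as the real-valued objective of a problem of the form $(P_2)$ with the same constraints $g_i$. By hypothesis $f^c$ is convex and directionally differentiable at $p_0$, the constraints $g_i,~i\in J$, are convex at $p_0$, and all $g_i$ are directionally differentiable at $p_0$; moreover conditions (i) and (ii) are exactly the KKT hypotheses of Theorem \ref{theoremhash1,p3} with $f$ replaced by $f^c$. Hence Theorem \ref{theoremhash1,p3} applies and, since $f^c$ is non-constant on $X$, yields $f^c(p_0)<f^c(p)$ for every $p\in X$ with $p\neq p_0$. By the order relation (\ref{orderrelation,p2}), a strict inequality of the centers already forces $f(p_0)<^{\text{min}}f(p)$ irrespective of the half-widths, so by totality no $p\neq p_0$ satisfies $f(p)\leq^{\text{min}}f(p_0)$, and $p_0$ is a strict optimal solution to $(P_3)$.

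For part (2), I would run the same argument with $f^w$ in place of $f^c$: here $f^w$ is convex and directionally differentiable at $p_0$, the constraint hypotheses are unchanged, and conditions (i)--(ii) match those of Theorem \ref{theoremhash1,p3} with $f$ replaced by $f^w$. This gives $f^w(p_0)\leq f^w(p)$ for all $p\in X$. Because $f^c$ is constant on $X$, we have $f^c(p_0)=f^c(p)$ for every $p\in X$, so the second clause of (\ref{orderrelation,p2}) governs the comparison, and $f^w(p_0)\leq f^w(p)$ translates precisely into $f(p_0)\leq^{\text{min}}f(p)$; thus $p_0$ is optimal. If in addition $f^w$ is non-constant on $X$, Theorem \ref{theoremhash1,p3} upgrades the scalar conclusion to $f^w(p_0)<f^w(p)$ for all $p\neq p_0$, and with equal centers this yields $f(p_0)<^{\text{min}}f(p)$, i.e.\ strict optimality.

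The individual steps are essentially routine invocations of Theorem \ref{theoremhash1,p3}, so I do not expect a serious obstacle. The one point requiring genuine care is the final passage through the total order (\ref{orderrelation,p2}): one must correctly identify which clause of the order is active in each case (the center clause in part (1), the half-width clause in part (2)) and check that the strictness or non-strictness of the scalar conclusion is faithfully carried over to the interval order, using that under (\ref{orderrelation,p2}) a strict center gap dominates the half-width comparison while an exact center tie hands control to the half-widths.
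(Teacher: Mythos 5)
Your proposal is correct and follows essentially the same route as the paper: both parts are reduced to Theorem \ref{theoremhash1,p3} applied to the scalar problem with objective $f^c$ (respectively $f^w$) under the same constraints, and the scalar conclusion is then transferred to $(P_3)$ through the order relation (\ref{orderrelation,p2}). The paper writes this out only for Part 1 and declares Part 2 ``similar,'' whereas you spell out the center-tie/half-width argument explicitly, which is exactly the intended reasoning.
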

\begin{proof}
	{\it 1.} From Theorem \ref{theoremhash1,p3}, $p_0$ is a strict optimal solution to the following real-valued convex problem.
	$$\begin{array}{lll}
		& \text{Minimize} & f^c(p),\vspace{0.1cm}\hspace{2cm}\\ 
		& \text{subject to,} & g_i(p)\leq 0, ~~ i\in \{1,2,...,m\},
	\end{array}$$
	i.e., $f^c(p_0) < f(p),~\forall~p\in X,$ which from order relation (\ref{orderrelation,p2}) yields that $p_0$ is strict optimal solution to Problem $(P_3)$.
	
	The proof of Part {\it 2} is similar to that of Part {\it 1}.
	\end{proof}

We now consider an IVOP problem on $(M,g)$ in which both objective and constraint functions are interval-valued, as follows:
$$\begin{array}{lll}
	(P_4)\hspace{1cm} & \text{minimize} & f(p)=\langle f^c(p), f^w(p) \rangle \vspace{0.1cm}\hspace{2cm}\\ 
	& \text{subject to} & g_i(p)=\langle g_i^c(p), g_i^w(p)\rangle \leq^{\text{min}} 0, ~~ i\in \{1,2,...,m\},
\end{array}$$
where $f,g_i:E \rightarrow \mathbb{I}~ i\in \{1,2,...,m\}$, $E\subseteq(M,g)$ and the set $X=\{p\in E:g_i(p)\leq 0,~ 1\leq i \leq m\}$ is the feasible set.\vspace{0.15cm}

The following theorem provides conditions that are the sufficient for $p_0$ to be an optimal solution to $(P_4)$.

\begin{theorem}
	\label{theoremhash4,p3}
	Let $E\subseteq (M,g)$ be star-shaped at $p_0\in E$. Let $p_0 \in X$  be a feasible point and $J=\{i:g_i(p_0)=0\}$. Let $\gamma(s),~s\in I,~0\in I,~\&~\gamma(I)\subseteq E,$ be any geodesic such that $\gamma(0)=p_0$, $\dot{\gamma}(0)=X_{p_0}$ and $(g_i^w\circ \gamma)(s), ~i\in \{1,2,...m\},$ is non-decreasing for $s\in I\cap [0, \infty)$. Suppose that the interval-valued constraints $g_i,~i\in J$, are cw-convex at $p_0$ and  $g_i,~ i\in \{1,2,...,m\}$, are gH-directionally differentiable at $p_0$. Then, we have the following;
	\begin{enumerate}
		\item Suppose that the center function $f^c$ of the objective function $f$ is directionally differentiable at $p_0$, convex at $p_0$ and non-constant on $X$. If there exist scalars $0\leq \mu_i \in \mathbb{R},~ i \in \{1,2,...,m\}$, such that
		\begin{enumerate}
			\item[(i)]  $0\leq^{\text{min}}Df^c(p_0;X_{p_0}) + \displaystyle\sum_{i=1}^{m}\mu_i Dg_i(p_0;X_{p_0});$
			\item [(ii)] $\mu_i g_i(p_0)=0 ~~ \forall~ i \in \{1,2,...,m\}.$
		\end{enumerate}
		Then, $p_0$ is strict optimal solution to Problem $(P_4)$;
		\item Suppose that the center function $f^c$ of the objective function $f$ is constant on $X$ and the half-width function $f^w$ of the objective function $f$ is directionally differentiable at $p_0$ and convex at $p_0$. If there exist scalars $0\leq \mu_i \in \mathbb{R},~ i \in \{1,2,...,m\}$, such that
		\begin{enumerate}
			\item[(i)]  $0\leq^{\text{min}}Df^w(p_0;X_{p_0}) + \displaystyle\sum_{i=1}^{m}\mu_i Dg_i(p_0;X_{p_0});$
			\item [(ii)] $\mu_i g_i(p_0)=0 ~~ \forall~ i \in \{1,2,...,m\}.$
		\end{enumerate}
		Then, $p_0$ is optimal solution to Problem $(P_4)$. Moreover, if $f^w$ is non-constant on $X$, then $p_0$ is strict optimal solution to Problem $(P_4)$.
	\end{enumerate}
\end{theorem}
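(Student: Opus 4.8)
The plan is to reduce the interval-valued problem $(P_4)$ to a real-valued constrained problem that is already covered by Theorem \ref{theoremhash1,p3}, and then to transport the conclusion back to the interval setting through the order relation (\ref{orderrelation,p2}). The one genuinely new feature relative to Theorems \ref{theoremhash2,p3} and \ref{theoremhash3,p3} is that each constraint derivative $Dg_i(p_0;X_{p_0})$ is now an interval rather than a real number, so the first task is to extract its center.

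First I would analyse the feasible set. Since $g_i(p)\leq^{\text{min}}\langle 0,0\rangle$ and $g_i^w\geq 0$, the order relation (\ref{orderrelation,p2}) forces, for every feasible $p$ and every $i$, either $g_i^c(p)<0$ or else $g_i^c(p)=0$ together with $g_i^w(p)=0$; in particular $g_i^c(p)\leq 0$. Evaluating at $p_0$ shows that $g_i^c(p_0)=0$ already entails $g_i^w(p_0)=0$, so the center active set $\{i:g_i^c(p_0)=0\}$ coincides with $J=\{i:g_i(p_0)=0\}$, while $g_i^c(p_0)<0$ for every $i\notin J$.

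Next I would convert the interval hypotheses into scalar ones. By Lemma \ref{lemma22,p2} each $Dg_i(p_0;X_{p_0})=\langle Dg_i^c(p_0;X_{p_0}),Dg_i^w(p_0;X_{p_0})\rangle$, and Lemma \ref{lemma4,p2} identifies the center of $Df^c(p_0;X_{p_0})+\sum_{i}\mu_i Dg_i(p_0;X_{p_0})$ as $Df^c(p_0;X_{p_0})+\sum_{i}\mu_i Dg_i^c(p_0;X_{p_0})$. Because $\langle 0,0\rangle\leq^{\text{min}}\langle c,w\rangle$ forces $c\geq 0$, condition (i) of Part 1 delivers the scalar inequality $Df^c(p_0;X_{p_0})+\sum_{i}\mu_i Dg_i^c(p_0;X_{p_0})\geq 0$, while $\mu_i g_i(p_0)=\langle 0,0\rangle$ gives $\mu_i g_i^c(p_0)=0$. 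These are exactly the KKT hypotheses of Theorem \ref{theoremhash1,p3} for the real problem of minimizing $f^c$ subject to $g_i^c\leq 0$, the convexity of each $g_i^c$ ($i\in J$) being inherited from the cw-convexity of $g_i$. Theorem \ref{theoremhash1,p3} then makes $p_0$ strict optimal for this center problem, and with $f^c$ non-constant on $X$ we obtain $f^c(p_0)<f^c(p)$ for every feasible $p\neq p_0$; since the centers differ strictly, (\ref{orderrelation,p2}) upgrades this to $f(p_0)<^{\text{min}}f(p)$, which is Part 1.

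Part 2 runs the same machinery with $f^w$ replacing $f^c$: condition (i) now reduces to $Df^w(p_0;X_{p_0})+\sum_{i}\mu_i Dg_i^c(p_0;X_{p_0})\geq 0$, Theorem \ref{theoremhash1,p3} applied to the minimization of $f^w$ subject to $g_i^c\leq 0$ yields $f^w(p_0)\leq f^w(p)$ on $X$, and since $f^c$ is constant on $X$ the intervals $f(p_0)$ and $f(p)$ share a common center, so (\ref{orderrelation,p2}) turns the half-width inequality into $f(p_0)\leq^{\text{min}}f(p)$; the strict conclusion follows when $f^w$ is non-constant. The step I expect to need the most care is the bookkeeping of the feasible region: Theorem \ref{theoremhash1,p3} is phrased over the center-feasible set $\{p:g_i^c(p)\leq 0\}$, which can be strictly larger than $X$, so I would either rerun the short argument of Theorem \ref{theoremhash1,p3} directly over $X$ --- using only $g_i^c(p)\leq g_i^c(p_0)=0$ for $i\in J$ together with Theorem \ref{theorem30,p2} --- or verify that the non-constancy hypothesis is invoked only on $X$. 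This is the one place where the interval-valued constraints of $(P_4)$ genuinely depart from the real-valued constraints of $(P_3)$.
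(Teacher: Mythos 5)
Your proposal is correct, but it takes a genuinely different route from the paper's own proof. The paper never scalarizes the constraints: for $i\in J$ it writes $g_i(p)\ominus_{gH}g_i(p_0)\leq^{\text{min}}0$, invokes Theorem \ref{theorem23,p2} (the interval-valued convexity inequality for cw-convex functions) together with transitivity of the total order to get $Dg_i(p_0;X_{p_0})\leq^{\text{min}}0$, sums these interval inequalities via Lemma \ref{lemma2,p2}\textbf{(i)}--\textbf{(ii)} to obtain $\sum_{i=1}^{m}\mu_i Dg_i(p_0;X_{p_0})\leq^{\text{min}}0$, combines this with condition (i) through Lemma \ref{lemma2,p2}\textbf{(iii)} to conclude the real inequality $Df^c(p_0;X_{p_0})\geq 0$, and only then drops to Theorem \ref{theorem30,p2} applied to $f^c$ over $X$ itself; Part 2 is declared analogous. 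You instead scalarize at the outset: Lemma \ref{lemma22,p2} and Lemma \ref{lemma4,p2} extract the center of condition (i), and the whole problem is handed to Theorem \ref{theoremhash1,p3} applied to the center problem (minimize $f^c$ subject to $g_i^c\leq 0$), which mirrors how the paper itself proves Theorem \ref{theoremhash3,p3} and anticipates the reduction the paper only sketches in the remark following the theorem. Your route has a genuine payoff: it never uses convexity of the half-width functions $g_i^w$, so it exposes that in the cw-convexity hypothesis on the constraints only convexity of $g_i^c$ at $p_0$ is really needed (the monotonicity of $g_i^w\circ\gamma$ is still required, since Lemma \ref{lemma22,p2} is what identifies the center of $Dg_i(p_0;X_{p_0})$ with $Dg_i^c(p_0;X_{p_0})$). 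The price is the feasible-set bookkeeping you flagged yourself: the center problem's feasible set $\{p\in E: g_i^c(p)\leq 0\}$ can strictly contain $X$, while non-constancy is assumed only on $X$; your fix --- take only the non-strict conclusion of Theorem \ref{theoremhash1,p3}, restrict it to $X$, and then invoke injectivity of $f^c$ (resp.\ $f^w$) on $X$ to upgrade to strictness --- is exactly right, and it is a complication the paper's direct interval-level argument avoids entirely, since that argument quantifies over $p\in X$ from the first line.
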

\begin{proof}
	{\it 1.} For any feasible point $p\in X$, we have, from $g_i(p)\leq^{\text{min}}0$, $i\in \{1,2,...,m\},$ and $g_i(p_0)=0,~ i\in J,$ that
	$$g_i(p) \ominus_{gH} g_i(p_0) \leq^{\text{min}}0, ~~ \forall~i\in J.$$
	Since, $g_i(p_0), i\in J$, are cw-convex at $p_0$, we have from Theorem \ref{theorem23,p2} and transitivity of total order, that
	$$Dg_i(p_0;X_{p_0}) \leq^{\text{min}} 0.$$
	For $0\leq \mu_i\in \mathbb{R}, ~ i\in J$, and using induction on Parts {\bf (i)} and {\bf (ii)} of Lemma \ref{lemma2,p2}, we have
	$$\displaystyle \sum_{i\in J}\mu_iDg_i(p_0;X_{p_0}) \leq^{\text{min}} 0,$$
	which together with $\mu_ig_i(p_0)=0$, yields
	\begin{equation} \label{eq19,p3}
		\sum_{i=1}^{m}\mu_iDg_i(p_0;X_{p_0}) \leq^{\text{min}} 0.
	\end{equation}
	From condition {\it 1.(i)} and Lemma \ref{lemma2,p2}{\bf (iii)}, we have
	\begin{equation}
		\label{eq20,p3} -Df^c(p_0;X_{p_0}) \leq^{\text{min}} \sum_{i=1}^{m}\mu_iDg_i(p_0;X_{p_0}).
	\end{equation}
	Inequalities (\ref{eq19,p3}) and (\ref{eq20,p3}), together yield that $Df^c(p_0;X_{p_0}) \geq 0$. Which from Theorem \ref{theorem30,p2} gives $f^c(p_0)\leq f^c(p), ~\forall~p\in X$. But $f^c$ is non-constant on $X$, we have $f^c(p_0) < f^c(p), ~\forall~p\in X$. This by order relation (\ref{orderrelation,p2}) yields that $f(p_0)<^{\text{min}}f(p),~\forall ~ p\in X$. Thus, $p_0$ is strict optimal solution to Problem $(P_4)$.\\
	
	\noindent {\it 2.} Its proof is similar to that of Part {\it 1.}
	\end{proof}

From Problem $(P_4)$, we have the condition $g_i(p)\leq^{\text{min}}0$, which, from order relation (\ref{orderrelation,p2}), yields: 
$$g_i^c(p)\leq 0 ~~\text{and}~~ g_i^w(p)\geq 0, ~~ \forall~ i\in \{1,2,...,m\}.$$ 
Using the last two expressions, we define the following problem:
$$\begin{array}{lll}
	(P_5)\hspace{1cm} & \text{minimize} & f(p)=\langle f^c(p), f^w(p) \rangle \vspace{0.1cm}\hspace{2cm}\\ 
	& \text{subject to} & g_i^c(p)\leq0,  ~~ i\in \{1,2,...,m\},\\
	& &  -g_i^w(p)\leq 0, ~~ i\in \{1,2,...,m\}.
\end{array}$$
It is important to note that problem $(P_5)$ is equivalent to problem $(P_3)$. Additionally, it can be shown that the feasible set $\mathcal{X}^4$ of problem $(P_4)$ is a subset of the feasible set $\mathcal{X}^3$ of problem $(P_3)$. This implies that if $p^* \in \mathcal{X}^4$ is an optimal solution to problem $(P_3)$, then $p^*$ is also an optimal solution to problem $(P_4)$. Therefore, the problem $(P_4)$ can also be addressed by the results established in this article for the problem $(P_3)$, provided that the optimal solution lies within the feasible set $\mathcal{X}^4$ of the problem $(P_4)$.

\section{Conclusion}
By utilizing gH-directional differentiability, we have successfully obtained KKT-type optimality conditions for an interval-valued optimization problem on Riemannian manifolds in this article. By an example, we have demonstrated the superiority of these KKT conditions over those developed in Euclidean spaces, as evidenced by previous studies \cite{chalco-cano,chen,sadikur,bilal1,bilal2,wu1,wu2}. These KKT conditions have vast potential for applications in machine learning and artificial intelligence, and one can explore the development of optimization techniques and algorithms to determine the optimal solution in the future. Additional inquiries into nonlinear optimization, such as examining saddle point criteria, constraint qualifications, duality theory, and other related topics, can be conducted on spaces that are nonlinear in nature.




\end{document}